
\documentclass[preprint,12pt,number]{elsarticle}




\usepackage{amssymb}
\usepackage{amsmath}

\usepackage[
  backend=biber,
  style=numeric,
  sorting=nyt,
  giveninits=true,
  terseinits=false,
  maxnames=4,
  minnames=3
]{biblatex}
\DeclareNameAlias{default}{given-family}

\addbibresource{bibliography.bib}

\usepackage{amsmath}
\usepackage{amssymb}
\usepackage{amsthm}
\usepackage{graphicx}

\usepackage{float}

\usepackage[margin=1truein]{geometry}

\usepackage{enumitem} 

\usepackage{url}

\usepackage{amsthm}
\theoremstyle{plain} 
\newtheorem{theorem}{\indent\sc Theorem}[section] 

\newtheorem{claim}{\indent\sc Algorithm}

\theoremstyle{definition} 
\newtheorem{definition}[theorem]{\indent\sc Definition}
\newtheorem{remark}[theorem]{\indent\sc Remark}
\newtheorem{example}[theorem]{\indent\sc Example}





\journal{Discrete Applied Mathematics}

\begin{document}

\begin{frontmatter}
\title{Vertex evaluation of multiplex graphs using Forman Curvature} 

\author{Taiki Yamada} 

\affiliation{organization={Interdisciplinary Faculty of Science and Engineering, Shimane University},
            addressline={Nishikawatsu 1060}, 
            city={Matsue},
            postcode={690-8504}, 
            state={Shimane},
            country={Japan}}

\begin{abstract}
The identification of vertices that play a central role in network analysis is a fundamental challenge. Although traditional centrality measures have been extensively employed for this purpose, the increasing complexity of modern networks necessitates the use of sophisticated metrics. The concept of Forman curvature has recently garnered significant attention as a promising approach. We define the Forman curvature for multiplex graphs, which are a category of complex networks characterized by multiple layers of connections between nodes. We then prove the key properties of the Forman curvature in the context of multiplex graphs and show its usefulness in identifying vertices occupying central positions within these networks. Moreover, through a series of comparative experiments with traditional graph features and graph kernels, we demonstrate that the Forman curvature can function as an effective metric for classifying the overall structure of networks.
\end{abstract}



\begin{keyword}


Forman Ricci curvature, multilayer network, network analysis
\end{keyword}

\end{frontmatter}

\section{Introduction}\label{Sec:Introduction}
Graph theory provides essential tools for visualizing and analyzing complex systems in the physical and social sciences, as well as information engineering. To enhance realism, graphs are often augmented with weights assigned to vertices and/or edges, resulting in weighted graphs. Extending fundamental concepts, such as node degree and distance metrics, to weighted graphs introduces complexity, thereby increasing the difficulty of the analysis. Analytical methods specifically designed for doubly-weighted graphs, which feature weights on both vertices and edges, remain relatively scarce. However, even doubly-weighted graphs may not fully capture the intricacies of many real-world situations, where, depending on the event, multiple weights may be assigned to vertices and edges. For example, consider a trading network that models the relationships between companies. Edges represent transactions, but a comprehensive analysis may require multiple associated weights, such as transaction volume, transportation distance, and carbon emissions. Multilayer graphs provide a framework for addressing this complexity. The concept and terminology of multilayer networks have been explored by many researchers over the years. 
For instance, Allard et al. \cite{allard2009heterogeneous} proposed a definition for multilayer or multitype networks based on assigning a type from a set of $M$ types to each node in a set of $N$ nodes. Castelblanco et al. \cite{castelblanco2024combining} employed a multilayer network approach for multidimensional analysis encompassing economic transaction networks and risk propagation networks among stakeholders in megaprojects.
The definitions and implementations of these networks vary considerably. 
In their comprehensive review, Kivela et al. \cite{kivela2014} cataloged 26 distinct names used for related network structures,  noting that while some terms represent the same structure and framework,  others denote specific structural variations.
Kurant et al. \cite{PhysRevLett.96.138701} modeled transportation networks and infrastructure using edge-weighted multilevel graphs, demonstrating advantages over conventional models.
Similarly, Aleta et al. \cite{Aleta2016AMP} analyzed the traffic networks of nine European cities using unweighted multilevel graphs, assessed vulnerability, and identified key nodes via centrality computations. 
In addition, recently, network analysis has been widely applied in the fields of deep learning and time-series analysis. For instance, Yonan et al. \cite{Yonan} proposed an intrusion tendency recognition model that utilized deep learning to leverage network-level features. Adamopoulos et al. \cite{Adamopoulos} developed a time series forecasting model of economic growth that employs an LSTM model incorporating Seq2Seq and attention mechanisms. The efficacy of incorporating network structures and temporal variations into learning models was demonstrated in these studies. Extending this framework facilitates the representation of time-series data as multi-layered networks, wherein temporal transitions are superimposed in layers. Consequently, the integration of multilayer network analysis is anticipated to yield insights that exceed the capabilities of conventional methods. However, this necessitates the development of more sophisticated analytical techniques that can adequately integrate dependencies between layers and diverse information sources.
Therefore, this study proposes a new graph algorithm with applications to transportation networks and time-series data in mind. Typically, when treating transportation networks or time-series data as multilayer networks, the vertex sets of each layer are often identical. Furthermore, the strength of the connections between the layers is thought to depend heavily on the structure of each layer. Consequently, this study introduces a multilayer graph, called a compile graph, based on a multiplex graph.

A key challenge in network analysis is the identification of centrally important vertices. Centrality measures are commonly used for this task. However, the increasing scale and complexity of network data available today necessitate the development of more advanced analytical methods. Specifically, well-established centrality measures tailored for doubly-weighted graphs are lacking, which are the primary focus of this study.
The clustering coefficient is also a powerful concept in network analysis. 
Barrat et al. \cite{Barrat} introduced the clustering coefficient onto edge-weighted graphs and demonstrated its usefulness by applying it to large-scale infrastructure systems. Furthermore, Saucan and Appleboim \cite{CBCforDNA} introduced the metric curvature and clustering coefficient onto doubly-weighted graphs and applied it to DNA Microarray data analysis. Their research not only bridges graph theory and differential geometry but also represents an innovative analytical approach. However, the doubly-weighted graph they employed uses edge weights that are dependent on vertex weights, limiting its applicability. In mathematical modeling involving doubly-weighted graphs, it is customary to define vertex and edge weights independently. For example, when considering transportation networks, vertex weights are often defined as city populations and edge weights as actual distances. It is important to note that, given the independence of these weights from city populations, the actual distances do not depend on the population size.

In parallel with these developments, research on multilayer networks has increasingly emphasized the need for refined centrality notions. For example, Criado et al. \cite{Criado01022012} introduced and demonstrated the utility of a clustering coefficient for unweighted, multilayer graphs. Traditional methods include aggregating multiple layers or combining them into a single graph to utilize the existing centrality measures. Aggregation methods vary, ranging from simple grouping to calculating the average or weighted sum of each layer's scores. 
Aggregation simplifies the structure of complex multilayer networks, which is useful for elucidating the overall network properties. However, according to Cozzo et al. \cite{Cozzo_2015}, this approach carries the risk of losing the distinct structural characteristics and roles of each layer. 
On the other hand, the method of connecting layers is useful because it allows the network structure to be understood without losing information from each layer. However, according to Sole-Ribalta et al. \cite{Albert}, the strength of the inter-layer connections significantly influences centrality. If the definition of these inter-layer connections is arbitrary, it can cause strong layer dependency issues and potentially lead to unstable results. 
This indicates that existing multilayer centrality measures, although valuable, may not always provide a robust characterization of vertex importance in complex settings, such as doubly-weighted graphs. 
Therefore, Ricci curvature, originating from classical Riemannian geometry, has attracted attention as an alternative measure distinct from traditional centrality or clustering metrics. 

The Ricci curvature, which is defined on smooth Riemannian manifolds to quantify local geometry, is not directly applicable to discrete structures such as graphs. However, the discrete Ricci curvature, introduced as an extension of the concept of curvature, has proven valuable for characterizing graph structures (\cite{forman2003bochner, JL, LLY}). Building on this theoretical foundation, research on the application of discrete Ricci curvature has been conducted across various domains.
Several distinct definitions of discrete Ricci curvature exist, with the Ollivier--Ricci curvature (based on optimal transport theory) and Forman--Ricci curvature (based on weight functions) being representative examples.
Ollivier-Ricci curvature uses the $L^1$-Wasserstein distance from optimal transport theory and is related to clustering coefficients.
Saucan and Weber \cite{Saucan} introduced discrete Ricci curvature on multilayer networks and provided an explicit formalism for computing these in practice. They concluded that the discrete Ricci curvature on multilayer networks can be an effective tool for elucidating complex network structures. 
In fact, it has already been applied in various fields on monolayer graphs (\cite{coupette2023ollivierricci,gosztolai2021unfolding,ni2015ricci}).
However, the Ollivier--Ricci curvature cannot be defined on doubly-weighted graphs, and its computation can be expensive owing to the complexity involved.
In contrast, the Forman--Ricci curvature is grounded in combinatorial principles and is instrumental in identifying significant vertices, similar to the role of the Ollivier--Ricci curvature. Moreover, owing to its simpler definition, the Forman--Ricci curvature is generally computationally more efficient than the Ollivier--Ricci curvature.
Thus, the Forman--Ricci curvature has also been applied in various fields,  including the identification of bottlenecks in networks (\cite{de2021using, tee2021enhanced}).
Through a comparative analysis of various networks, Samal et al. \cite{samal2018comparative} demonstrated that both the Ollivier--Ricci and Forman--Ricci curvatures can effectively identify important vertices. Furthermore, they demonstrated that the Ollivier--Ricci curvature may be more suited for networks representing stochastic phenomena, whereas the Forman--Ricci curvature might be more effective for networks reflecting actual distances.  Crucially, the Forman--Ricci curvature can be readily defined and computed for doubly-weighted graphs. Given our goal of developing a versatile analysis method applicable to doubly-weighted multiplex networks, we employed the Forman--Ricci curvature in this study. 
As shown in Remark 3.2 and the results of the sensitivity analysis, the Forman curvature is an effective graph feature. However, since it is defined as a metric on each edge of the graph, it is not optimal as a metric on the graph's vertices. Therefore, this study introduces a comprehensive evaluation as a vertex-based metric using Forman curvature.
It should be noted that a comprehensive evaluation is not intended to replace existing centrality or clustering measures. Rather, it should be viewed as a complementary tool. While conventional measures typically capture a vertex's global influence or prominence within a network, curvature reflects local geometric and structural characteristics, such as the connections and vulnerabilities between vertices. By integrating these perspectives, the Forman--Ricci curvature enhances the explanatory power of conventional metrics and provides a more comprehensive understanding of network organization.
Hence, we will refer to it as the Forman curvature. To support this claim, comparative experiments with traditional graph features and graph kernels are also conducted.

The remainder of this paper is organized as follows.
Section 2 defines the multiplex graph framework used in this study, including relevant terminology. 
Section 3 shows that the Forman curvature on graphs is an effective characteristic for graphs. Then, it introduces the definition of the Forman curvature for multiplex graphs, establishes its theoretical properties, and illustrates its computation method using concrete examples. 
Section 4 introduces a comprehensive evaluation as a metric for vertices and proposes a novel graph algorithm for multiplex graphs. 
Section 5 demonstrates the effectiveness of the graph algorithm by executing it on specific multilayer graphs. In addition, comparative experiments with existing evaluation metrics for classifiers are conducted, proving that comprehensive evaluation is also a crucial metric for classifying graphs.
Finally, Section 6 summarizes our conclusions and discusses the potential directions for future research.

\section{Preliminaries}
A graph is typically represented as a pair $G=(V,E)$, where $V$ is a set of vertices and $E$ is a set of edges connecting pairs of vertices.
In this basic representation, all the edges are considered equivalent. However, in many cases, the edges have distinct values or significance. For example, in a transportation network, an edge may represent an actual distance or transportation cost. To capture this, an edge-weight function $w: E \to \mathbb{R}$ is introduced. A graph equipped with such a function, denoted $V=(G,E,w)$, is called an \textit{edge-weighted graph}. 
Alternatively, assigning values to vertices rather than edges can provide a more accurate representation in specific contexts. For example, in a transaction network, the weight of a vertex can represent the market capitalization or size of a company. Similarly, a vertex-weight function $m:V \to \mathbb{R}$ can be defined as follows: A graph with this function, denoted $G=(V,E,m)$, is called a \textit{vertex-weighted graph}. 
A graph incorporating both types of weights, denoted as $G=(V,E,w,m)$, is termed a \textit{doubly-weighted graph}.

To integrate these graphs and represent them as multiplex graphs, it is necessary to add a new element, that is, a set of “layers,” to this graph representation.
\begin{definition}
A \textit{multiplex graph} is defined by the structure $M = (V_M, E_M, V, \mathcal{L})$, where 
\begin{enumerate}
    \item $V = \left\{u_1, u_2, \cdots, u_n \right\}$ is the set of state vertices, 
    \item $\mathcal{L} = \left\{1, \cdots, L \right\}$ is the set of layers,
    \item $V_M = V \times \mathcal{L}$ is the set of vertices in each layer. Each element ($x^i$) in $V_M$ represents the vertex $x$ in layer $i$, and the subset of state vertices belonging to a specific layer $i$ is denoted by $V_M^i = V \times \left\{i\right\}$.
    \item $E_M \subseteq V_M \times V_M$ is the set of edges, and $E_M$ is typically partitioned into a set of intra-layer edges, defined as $E_A := \left\{(x^i, y^j) \in E_M \mid i = j \right\}$, and a set of inter-layer edges, defined as $E_C := \left\{(x^i, y^j) \in E_M \mid x = y \right\}$. The set of edges in layer $i$ is denoted as $E_M^i$.
\end{enumerate}
Specifically, when $L=1$, $M$ is referred to as a \textit{monolayer graph} and corresponds to the standard graph.
\end{definition}
\begin{remark}
    The name given to a graph formed by layering some graphs varies depending on the characteristics of the original graphs and the integration method used.
    As noted by \cite{kivela2014}, a multiplex graph is a multilayer graph that possesses the same set of vertices in each layer and contains inter-layer edges connecting only the corresponding vertices across different layers. 
    Therefore, this study also adopts the term multiplex graph.
\end{remark}
 \begin{figure}[H]
     \centering
     \includegraphics[scale=0.5]{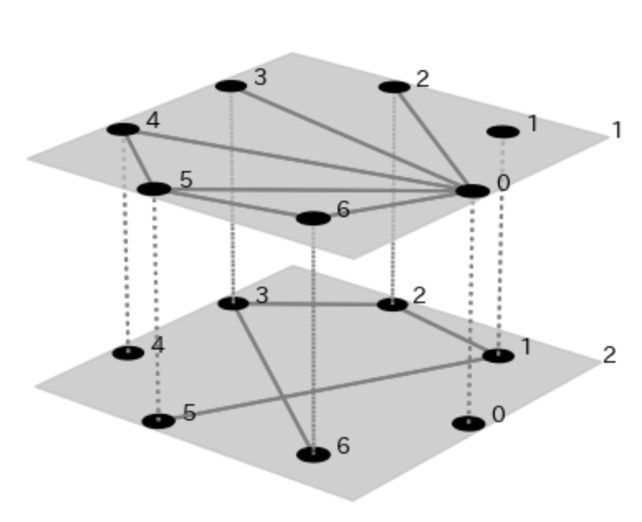} 
     \caption{Example of a multiplex graph $M=(V_M,E_M,V,\mathcal{L})$, where $V=\left\{0,1,2,3,4,5,6 \right\}$, $\mathcal{L}=\left\{1,2\right\}$, and $E_M$ comprises 11 edges.
     }
     \label{multilayer}
 \end{figure}

Similar to monolayer graphs, weight functions can be defined for multiplex graphs. Specifically, $w : E_M \to \mathbb{R}$ and $m : V_M \to \mathbb{R}$ are edge-and vertex-weight functions, respectively. A multiplex graph, denoted by $M=(V_M,E_M,V,\mathcal{L},w,m)$, is called a \textit{doubly-weighted multiplex graph}.

Henceforth, unless stated otherwise, the term “multiplex graph” in this paper refers to a doubly-weighted multiplex graph.

We now adapt the fundamental graph concepts to the multiplex setting.
\begin{definition}
    Let $M = (V_M, E_M, V, \mathcal{L}, w, m)$ be a multiplex graph, where: 
    \begin{enumerate}
    \item For any state vertex $x^i \in V_M$, the \textit{neighborhood} of $x^i$ is defined by
    \begin{eqnarray*}
    \Gamma (x^i) &=& \Gamma_A(x^i) \cup \Gamma_C (x^i)\\
    &:=&\left\{ y^i \in V_M \mid (x^i, y^i) \in E_A \right\} \cup \left\{ x^j \in V_M \mid (x^i, x^j) \in E_C \right\}.
    \end{eqnarray*}
    We also use the notation $\Gamma^{i,j}_C (x) := \Gamma_C(x^i) \setminus \left\{x^j \right\} = \Gamma_C (x^j) \setminus \left\{x^i \right\}$.
    \item For any state vertex $x^i \in V_M$, the \textit{degree} of $x^i$ is defined as
    \begin{eqnarray*}
    \deg (x^i) = \sum_{y^i \in \Gamma (x^i)}w((x^i,y^i)).
    \end{eqnarray*}
    \end{enumerate}
\end{definition}

When constructing a multiplex graph by combining multiple doubly-weighted graphs, the intra-layer weight of each edge of $E_A$ can be directly inherited from the original graph. However, it is important to define the weight of each inter-layer edge. Therefore, we introduce the concept of a compile graph. 
A compile graph is a comprehensive representation that can compile multiple doubly weighted graph information, as well as inter-layer information using specific inter-layer weights.
For this purpose, we first define the following quantity for any state vertex $x^i \in V_M$.
\begin{eqnarray*}
    W(x^i) := 
    \begin{cases}
        \left( \sum_{y^i \in \Gamma_A (x^i)} \cfrac{1}{\sqrt{w((x^i,y^i))}}\right)^{-1},& \text{if}\ \Gamma_A(x^i) \neq \emptyset,\\
        0,& \text{if}\ \Gamma_A(x^i) = \emptyset.
    \end{cases}
\end{eqnarray*}
\begin{definition}
    Let $\left\{G_l = (V, E_l, w_l, m_l) \right\}_{l=1}^L$ be a sequence of doubly-weighted graphs with the same number of vertices. 
    A \textit{compile graph} $CG$ is then defined as the multiplex graph     \begin{eqnarray*}
        V(CG) &=& V,\ \mathcal{L} = \left\{1, \cdots, L \right\},\ V_{CG} = \bigsqcup_l (V(G_l) \times \left\{l \right\}),\\
        E_A &:=& \left\{(x^i,y^j) \in V_{CG} \times V_{CG} \mid i=j,\ (x,y) \in E(G_i)\right\},\\
        E_C &:=& \left\{ (x^i,x^j) \mid x \in V,\ i,j\in \mathcal{L},\ i \neq j \right\},
    \end{eqnarray*}
   where $m(x^i) = m_i(x)$ for any vertex $x^i \in V_{CG}$, $w((x^i,y^i)) = w_i ((x,y))$ for any intra-layer edge $(x^i, y^i) \in E_A$, and
    \begin{eqnarray*}
    w((x^i,x^j)) = \min \left\{ W^2(x^i), W^2(x^j) \right\} 
    \end{eqnarray*}
    for any inter-layer edge $(x^i,x^j) \in E_C$.  
\end{definition}

The following remark provides insight into the technical definition.
\begin{remark}
    If all intra-layer edge weights equal 1, then $W(x^i)=1/|\Gamma_A(x^i)|$. In this case, the smaller the number of edges connected to vertex $x^i$, the larger the value of $W(x^i)$.
    To further investigate the behavior of $W(x^i)$ in more detail when the intra-layer edge weights are not equal to 1, we consider the function $f(x, y) = 1/\sqrt{x} + 1/\sqrt{y}$. If $x>0$, then     \begin{eqnarray*}
        \frac{\partial f}{\partial x} = - \frac{1}{x^{3/2}} < 0.
    \end{eqnarray*}
    Thus, $f(x,y)$ is a monotonically decreasing function when $x>0$ and $y>0$. This is because the partial derivatives of $y$ are obtained in a similar manner.
    As $W(x^i)$ can be expressed as an inverse function of $f(x,y)$,  $W(x^i)$ monotonically increases with respect to each incident intra-layer edge weight. That is, $W(x^i) < W(x^j)$ suggests that the overall weight associated with the intra-layer edges connected to vertex $x^j$ is greater than that connected to vertex $x^i$. 

    One rationale for employing complex weights as inter-layer weights in compile graphs is the practical advantages they confer.
    For example, consider a scenario in which the identification of vulnerable points is crucial for developing a resilient supply chain. According to Sharma et al. \cite{SCVA},  the priority vulnerable points in a supply chain are not temporary weaknesses but rather those that are consistently weak. Therefore, a multiplex graph is constructed using annual supply chain data. By defining the inter-layer weights as the compile graph definition, it becomes possible to efficiently extract these consistently weak locations.

    Moreover, the strategic allocation of these weights between layers enables the efficient computation of discrete curvature (refer to Section 3.2) and plays a pivotal role in the classification of multiplex graphs (refer to Section 5.2).
\end{remark} 
\section{Forman Curvature}
This section defines the Forman curvature, which is the specific type of discrete curvature used in this study.
\subsection{Forman curvature on graphs}
This subsection introduces the definition of the Forman curvature for standard (monolayer) graphs and outlines their key characteristics.
\begin{definition}[Forman \cite{forman2003bochner}]
Let $G=(V,E,w,m)$ be a doubly-weighted graph. The \textit{Forman curvature} of any edge $e = (x, y)$ is defined as
\begin{eqnarray*}
F((x,y)) = 2(m(x) +m(y)) - m(x)\sum_{(x,z) \in E} \sqrt{\frac{w((x,y))}{w((x,z))}} - m(y)\sum_{(y,w) \in E}\sqrt{\frac{w((x,y))}{w((y,w))}}.
\end{eqnarray*}
\end{definition}

\begin{remark}
\label{Forman property}
The properties of the Forman curvature were proven by \cite{samal2018comparative, sreejith2016forman}, and its efficacy as a graph feature was substantiated. The following sentence provides a concise overview of the properties relevant to this study.
\begin{enumerate}
\item The Forman curvature often takes on negative values.
\item The Forman curvature influences large-scale network connectivity. Removing nodes associated with low average Forman curvature can lead to faster network fragmentation than removing nodes based solely on high clustering coefficients. This suggests that nodes with a small Forman curvature are the most important for maintaining network integrity.
\item Unlike some traditional metrics for network structure analysis, such as centrality and clustering coefficient, which are primarily defined for unweighted or only edge-weighted graphs, Forman curvature can be defined for doubly-weighted graphs, making it suitable for the graphs considered in this study.
\item In unweighted graphs, the Forman curvature is negatively correlated with both the centrality and clustering coefficients (see Table 3 in \cite{samal2018comparative}).
\end{enumerate}
\end{remark}

\subsubsection{Sensitivity}
It has been discussed that vertex and edge weights are important for calculating the Forman curvature. Therefore, in this section, an evaluation is conducted of the extent to which the variation in the Forman curvature is dependent on the weight parameters $m(x)$, $m(y)$ and $w(e)$ for any $e \in E$. To facilitate this evaluation, the partial derivative of the Forman curvature with respect to each weight parameter is calculated, yielding the following expression:
\begin{enumerate}
    \item Partial derivative with respect to the vertex weight of $x$ and $y$
        \begin{eqnarray*}
            \begin{cases}
                \frac{\partial F((x,y))}{\partial m(x)} = 1 - \sum_{(x,z) \in E} \sqrt{\frac{w((x,y))}{w((x,z))}},\\
                \frac{\partial F((x,y))}{\partial m(y)} = 1 - \sum_{(y,w) \in E} \sqrt{\frac{w((x,y))}{w((y,w))}}.
            \end{cases}
        \end{eqnarray*}
    \item Partial derivative with respect to the edge weight of $e=(x,y)$
        \begin{eqnarray*}
            \frac{\partial F((x,y))}{\partial w((x,y))} = - \frac{1}{2\sqrt{w((x,y))}}\left(\sum_{(x,z) \in E} \frac{m(x)}{\sqrt{w((x,z))}} + \sum_{(y,w) \in E} \frac{m(y)}{\sqrt{w((y,w))}}\right).
        \end{eqnarray*}
    \item Partial derivative with respect to other edge weights
        \begin{eqnarray*}
        \frac{\partial F((x,y))}{\partial w(e)} =
            \begin{cases}
                 \frac{m(x)\sqrt{w((x,y))}}{2 w^{3/2}(e)},& \text{if}\ e \cap x \neq \emptyset,\\
                 \frac{m(y)\sqrt{w((x,y))}}{2 w^{3/2}(e)},& \text{if}\ e \cap y \neq \emptyset,\\
                 0,& \text{otherwise}.
            \end{cases}
        \end{eqnarray*}
\end{enumerate}
It should be noted that, with the exception of edges characterized by extremely low influence (i.e., $w((x,y))$ is very small), $\sum_{(x,z) \in E} \sqrt{\frac{w((x,y))}{w((x,z))}}$ and $\sum_{(y,w) \in E} \sqrt{\frac{w((x,y))}{w((y,w))}}$ take values of 1 or greater. 

It is evident from these calculations that an increase in the values of $m(x)$ or $m(y)$ results in a decrease in $F((x,y))$. Similarly, an increase in the value of $w((x,y))$ results in a decrease in $F((x,y))$. In contrast, an increase in the weights of the edges adjacent to edge $(x, y)$ results in an increase in $F((x,y))$. Therefore, the higher the importance of edge $(x,y)$ relative to its surroundings, the lower the value of $F((x,y))$. To more precisely clarify the change in $F((x,y))$, we define the dimensionless sensitivity $S$ as follows: For any weight parameter $p \in \left\{m(x), m(y)\right\} \cup \left\{w(e) \mid e \in E \right\}$,
\begin{eqnarray*}
    S_p((x,y)) := \frac{\partial F((x,y))}{\partial p} \frac{p}{F((x,y))}.
\end{eqnarray*}
For Zachary's karate club, we compute $S_p$ and visualize its change. The results are illustrated in Figure \ref{Sensitivity Analysis}, where the edge sensitivity maps highlight the degree of variation between the schemes used.

 \begin{figure}[H]
     \centering
     \includegraphics[scale=0.45]{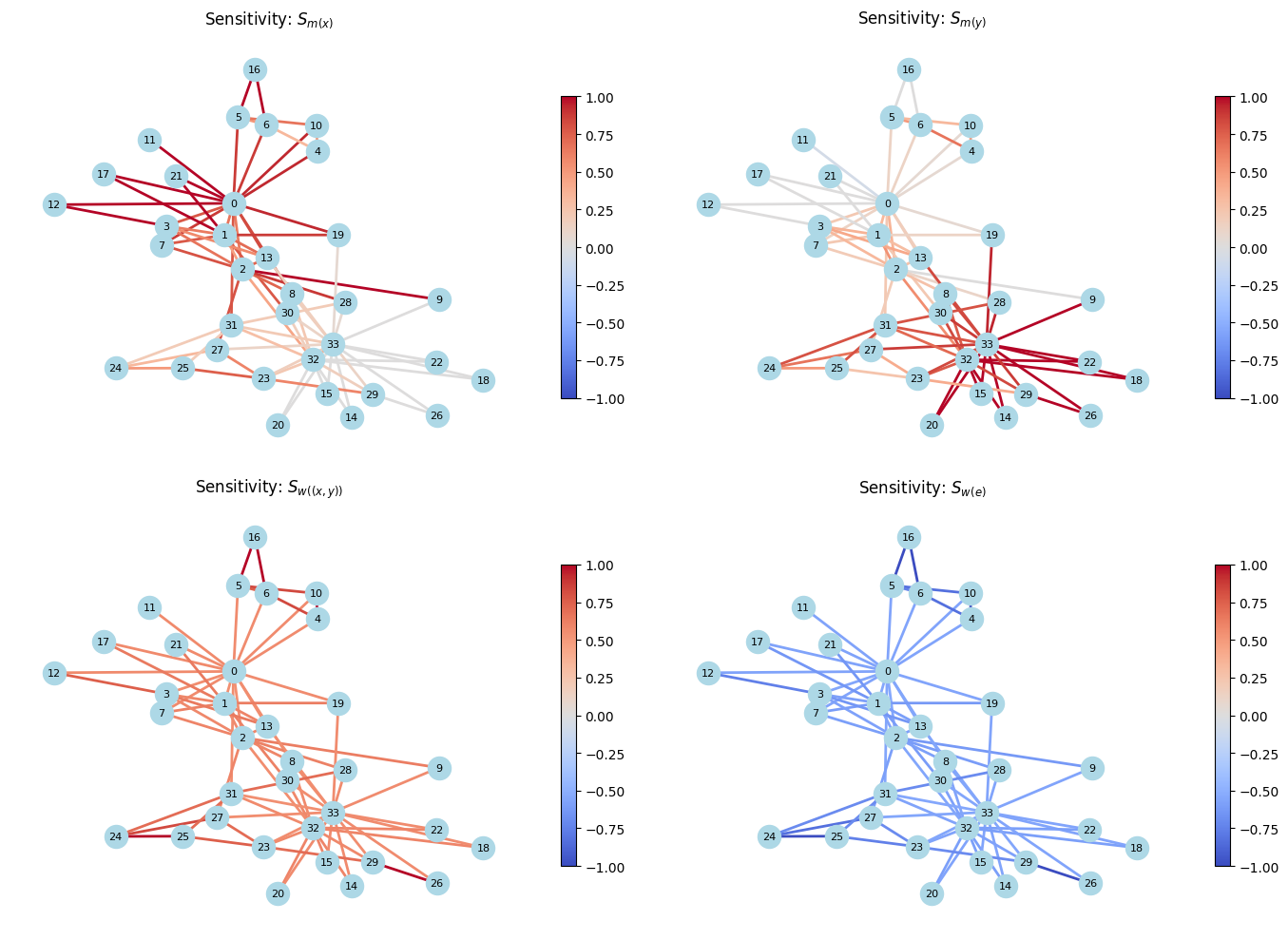} 
     \caption{Sensitivity Analysis for Weighting Schemes}
     \label{Sensitivity Analysis}
 \end{figure}

While the distribution of the Forman curvature exhibited local variations, the spatial patterns of the high- and low-curvature regions across the entire network remained consistent. This finding suggests that the curvature values exhibit relative stability with respect to weighting settings. In this context, the term "stability" is employed to denote the first observation that the numerical fluctuation in the curvature values is limited. The second observation is that the reshuffling of edge and node rankings based on the curvature levels is minimal. The third observation is that the overall network pattern is not reversed by the weighting. Given the fulfillment of all the aforementioned conditions in this analysis, it is concluded that structural interpretations based on the Forman curvature are reliable and do not depend excessively on the selected weighting scheme. 

It is imperative to acknowledge that if the sensitivity analysis reveals substantial alterations in curvature signs or a notable reshuffling of high-importance edge groups due to minor weighting modifications, the curvature interpretation is considered unstable.

\subsubsection{Example}
\label{subsec:ex}
To illustrate the relationship between the Forman curvature and graph structure, we computed it for specific graphs. For simplicity, we consider unweighted graphs ($w=1$ and $m=1$). The Forman curvature is computed as follows:
\begin{eqnarray*}
F((x,y)) = 4 - \deg(x) - \deg(y).
\end{eqnarray*}

\begin{example}
Let $G = K_n$ be a complete graph with $n$ vertices.
\end{example}
For any vertex $x \in V$, $\deg (x) = n-1$. Thus, the Forman curvature of any edge $e=(x,y)$ is 
\begin{eqnarray*}
F((x,y)) = 4 - 2(n-1).
\end{eqnarray*}

\begin{example}
Let $G = C_n$ be a cycle graph with $n$ vertices.
\end{example}
For any vertex $x \in V$, $\deg (x) = 2$. Thus, the Forman curvature of any edge is equal to zero.

\begin{example}
Let $G = T_r$ be a $r$-regular tree graph.
\end{example}
For any vertex $x \in V$, we have $\deg (x) = r$. Thus, the Forman curvature of any edge $e=(x,y)$ is 
\begin{eqnarray*}
F((x,y)) = 4 - 2r.
\end{eqnarray*}

 \begin{figure}[H]
     \centering
     \includegraphics[scale=0.3]{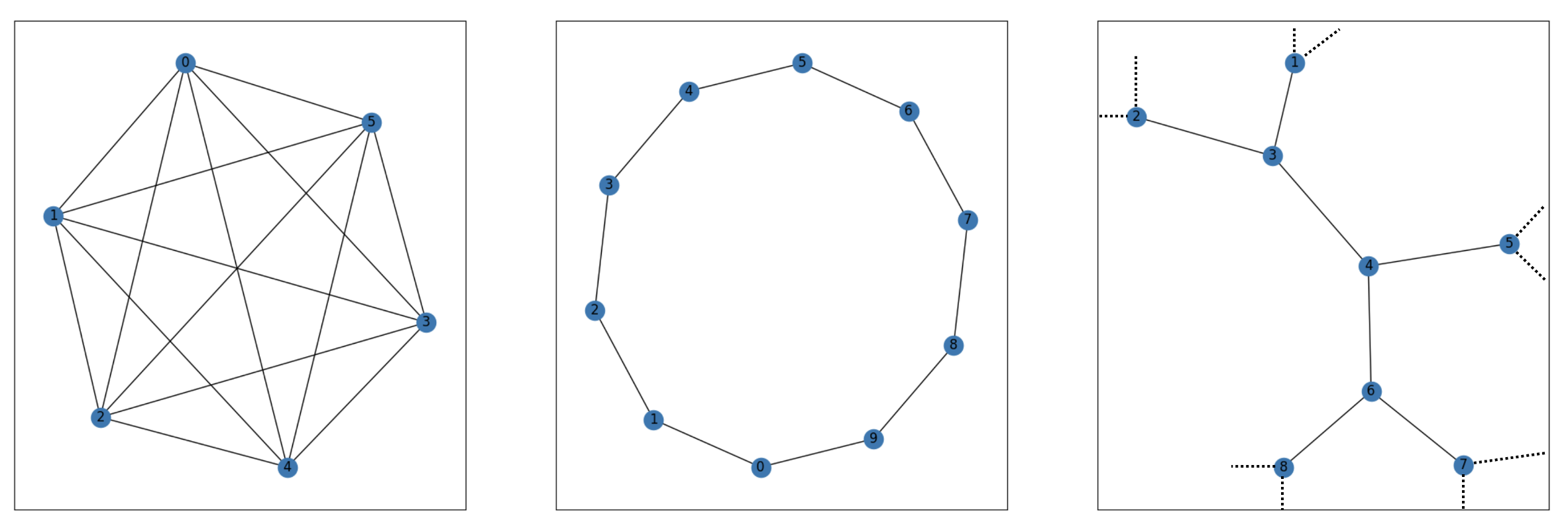} 
     \caption{Example graphs (from left to right: complete, cycle, and tree)}
     \label{graph example}
 \end{figure}

  As suggested by the definition of Forman curvature, the Forman curvature usually has negative values for most of the edges. However, it is important to note that a negative value does not automatically indicate that a node or edge acts as a bridge. In fact, both trees and complete graphs predominantly exhibit negative curvature, despite their substantially different structural roles. Instead, the magnitude of the curvature can be understood as capturing the extent of the "geometric tension" in the local network structure. More precisely, the greater the number of edges connected to an edge, the more negative the value of the Forman curvature. Therefore, the smaller the value of the Forman curvature, the more important the edge is in the network.
  In multilayer settings, edges that link otherwise strongly connected layers often exhibit a strongly negative curvature, reflecting their role in maintaining inter-layer connectivity. Therefore, while negative curvature values are common, relatively low curvature can highlight nodes or edges that are structurally significant for network integrity across layers. This makes curvature a complementary descriptor of centrality, which is sensitive to both intra- and inter-layer connectivity patterns.

\subsection{Forman curvature on compile graphs}
The definition of Forman curvature can be easily extended to multiplex graphs, provided that the vertex and edge weights are defined across all state vertices and edges.
\begin{definition}
    Let $M = (V_M, E_M, V, \mathcal{L}, w, m)$ be a multiplex graph. For any edge $e =(x^i, y^j) \in E_M$, the Forman curvature is
    \begin{eqnarray*}
        F(e) = 2(m(x^i) +m(y^j)) - m(x^i)\sum_{\bar{x}^k \in \Gamma(x^i)} \sqrt{\frac{w(e)}{w(x^i,\bar{x}^k)}} - m(y^j)\sum_{\bar{y}^l \in \Gamma(y^j)}\sqrt{\frac{w(e)}{w(y^j,\bar{y}^l)}}.
    \end{eqnarray*}
    Specifically, for an intra-layer edge $e = (x^i,y^i) \in E_A$,
    \begin{eqnarray*}
        F(e) = F^i((x,y)) - m(x^i)\sum_{x^k \in \Gamma_C(x^i)} \sqrt{\frac{w((x^i,y^i))}{w((x^i,x^k))}} - m(y^j)\sum_{y^k \in \Gamma_C(y^i)}\sqrt{\frac{w((x^i,y^i))}{w((y^i,y^k))}},
    \end{eqnarray*}
    where $F^i$ is the Forman curvature on the $i$-th graph.
\end{definition}

For the specific case of compile graphs, we introduce the following notations:
\begin{eqnarray*}
    \Gamma^{i,j}_{C,-}(x) := \left\{x^k \in \Gamma^{i,j}_C (x)\mid W(x^k) \leq \min \left\{W(x^i), W(x^j) \right\} \right\},\\
    \Gamma^{i,j}_{C,+}(x) := \left\{x^k \in \Gamma^{i,j}_C (x)\mid W(x^k) \geq \max \left\{W(x^i), W(x^j) \right\} \right\}.
\end{eqnarray*}
The Forman curvature of any inter-layer edge can then be simplified as follows:
\begin{definition}
    Let $CG = (V_{CG}, E_{CG}, V, \mathcal{L}, w, m)$ be a compile graph. The Forman curvature of any inter-layer edge $e =(x^i, x^j) \in E_C$ with $W(x^i) \leq W(x^j)$is expressed as
    \begin{eqnarray*}
        F((x^i,x^j)) &=& m(x^i)\left\{1 - W(x^i)\sum_{x^k \in \Gamma_C(x^i)} \left(\frac{1}{W(x^i)} \vee \frac{1}{W(x^k)}\right) \right\}\\
        &+ & m(x^j)\left\{2 - \frac{W(x^i)}{W(x^j)} - W(x^i)\sum_{x^l \in \Gamma_C(x^j)} \left(\frac{1}{W(x^j)} \vee \frac{1}{W(x^l)}\right) \right\} \\
        &=&-m(x^i)|\Gamma_C^{i,j}(x) \setminus \Gamma_{C,-}^{i,j}(x)| + m(x^j)\left(1 - (|\Gamma_{C,+}^{i,j}(x)|+1)\cfrac{W(x^i)}{W(x^j)}\right) \nonumber\\
        &\ & -m(x^i) \sum_{x^k \in \Gamma_{C,-}^{i,j}(x)} \cfrac{W(x^i)}{W(x^k)} - m(x^j)\sum_{x^l \in \Gamma^{i,j}_C(x) \setminus \Gamma_{C,+}^{i,j}(x)}\cfrac{W(x^i)}{W(x^l)}
    \end{eqnarray*}
    where, for real numbers $s$ and $t$, $s \wedge t := \min \left\{s,t\right\}$ and $s \vee t := \max \left\{s,t\right\}$.
\end{definition}

We now establish some properties of the Forman curvature in multiplex graphs.
\begin{theorem}
\label{lower}
    Let $CG = (V_{CG}, E_{CG}, V, \mathcal{L}, w, m)$ be a compile graph. 
    For any inter-layer edge $e =(x^i, x^j) \in E_C$ with $W(x^i) \leq W(x^j)$, we then have
    \begin{eqnarray}
    \label{Formanvalue}
        F((x^i,x^j)) 
        &\geq& -(m(x^i)+m(x^j)) |\Gamma^{i,j}_{C,-} (x)|\max \left\{ \cfrac{W(x^i)}{W(x^l)} \mid x^l \in \Gamma^{i,j}_{C,-}(x)\right\} \nonumber\\
        &\ & - m(x^j)\left(\cfrac{W(x^i)}{W(x^j)} -1 \right)\\
        &\geq& -(m(x^i)+m(x^j)) |\Gamma^{i,j}_{C,-} (x)|\max \left\{ \cfrac{W(x^i)}{W(x^l)} \mid x^l \in \Gamma^{i,j}_{C,-} (x)\right\}.
    \end{eqnarray}
    Equality holds in the first inequality line if and only if $\Gamma^{i,j}_C (x)= \Gamma^{i,j}_{C,-}(x)$. Equality holds in the second inequality line if and only if $W(x^i) = W(x^j)$.
\end{theorem}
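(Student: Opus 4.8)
The plan is to argue directly from the closed-form expression for $F((x^i,x^j))$ recorded at the end of Definition 3.9, namely
\begin{align*}
F((x^i,x^j)) &= -m(x^i)\,|\Gamma_C^{i,j}(x)\setminus\Gamma_{C,-}^{i,j}(x)| + m(x^j)\Bigl(1-(|\Gamma_{C,+}^{i,j}(x)|+1)\tfrac{W(x^i)}{W(x^j)}\Bigr) \\
&\quad - m(x^i)\sum_{x^k\in\Gamma_{C,-}^{i,j}(x)}\tfrac{W(x^i)}{W(x^k)} - m(x^j)\sum_{x^l\in\Gamma_C^{i,j}(x)\setminus\Gamma_{C,+}^{i,j}(x)}\tfrac{W(x^i)}{W(x^l)},
\end{align*}
and then to bound each of the four terms on the right from below, using throughout that the vertex weights $m$ and the quantities $W(\cdot)$ are positive.

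Concretely I would proceed in four steps. First, split off from the second term the subexpression $m(x^j)\bigl(1-W(x^i)/W(x^j)\bigr)=-m(x^j)\bigl(W(x^i)/W(x^j)-1\bigr)$, which is exactly the second summand of the claimed bound, leaving the non-positive remainder $-m(x^j)\,|\Gamma_{C,+}^{i,j}(x)|\,W(x^i)/W(x^j)$ to be absorbed. Second, estimate the sum over $\Gamma_{C,-}^{i,j}(x)$: for every $x^k$ in this set one has $W(x^k)\le\min\{W(x^i),W(x^j)\}=W(x^i)$, hence $W(x^i)/W(x^k)\ge 1$, and replacing each ratio by $\mu:=\max\{W(x^i)/W(x^l)\mid x^l\in\Gamma_{C,-}^{i,j}(x)\}$ gives $\sum_{x^k\in\Gamma_{C,-}^{i,j}(x)}W(x^i)/W(x^k)\le|\Gamma_{C,-}^{i,j}(x)|\,\mu$, with the usual convention that this quantity is $0$ when $\Gamma_{C,-}^{i,j}(x)=\emptyset$. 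Third, handle the sum over $\Gamma_C^{i,j}(x)\setminus\Gamma_{C,+}^{i,j}(x)$ by splitting it along $\Gamma_{C,-}^{i,j}(x)$: on $\Gamma_{C,-}^{i,j}(x)$ bound again by $\mu$ — this supplies the $m(x^j)$-share of the term $-(m(x^i)+m(x^j))|\Gamma_{C,-}^{i,j}(x)|\,\mu$ — and on the complementary neighbours note that $W(x^l)>W(x^i)$, so $W(x^i)/W(x^l)<1$. Fourth, collect the estimates together with the leading term $-m(x^i)\,|\Gamma_C^{i,j}(x)\setminus\Gamma_{C,-}^{i,j}(x)|$; this produces the asserted right-hand side $-(m(x^i)+m(x^j))|\Gamma_{C,-}^{i,j}(x)|\,\mu - m(x^j)\bigl(W(x^i)/W(x^j)-1\bigr)$ together with residual contributions from the neighbours in $\Gamma_C^{i,j}(x)\setminus\Gamma_{C,-}^{i,j}(x)$ — those in $\Gamma_{C,+}^{i,j}(x)$ and the intermediate ones with $W(x^i)<W(x^l)<W(x^j)$ — and one then argues that these residuals do not worsen the bound.

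The main obstacle is exactly this last verification, together with extracting the equality conditions from it. The hypothesis $\Gamma_C^{i,j}(x)=\Gamma_{C,-}^{i,j}(x)$ forces $\Gamma_C^{i,j}(x)\setminus\Gamma_{C,-}^{i,j}(x)=\emptyset$ and, when $W(x^i)<W(x^j)$, also $\Gamma_{C,+}^{i,j}(x)=\emptyset$, so every residual term vanishes and each inequality used in the chain becomes an equality; this is what yields the equality characterisation in the first line. The second inequality, and its equality case, are immediate: since $W(x^i)\le W(x^j)$ we have $W(x^i)/W(x^j)-1\le 0$, so $-m(x^j)\bigl(W(x^i)/W(x^j)-1\bigr)\ge 0$ by positivity of $m(x^j)$, with equality precisely when $W(x^i)=W(x^j)$. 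I would therefore arrange the write-up in the order: rewrite via Definition 3.9; split off $-m(x^j)(W(x^i)/W(x^j)-1)$; estimate the $\Gamma_{C,-}^{i,j}(x)$-sums by $\mu$; dispose of the residual $\Gamma_C^{i,j}(x)\setminus\Gamma_{C,-}^{i,j}(x)$ terms; and finally deduce the second inequality — reading off the two equality statements along the way.
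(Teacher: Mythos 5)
Your route is genuinely different from the paper's: the paper never estimates the closed form term by term, but instead runs a comparison argument across modified compile graphs (forcing a neighbour $x^k$ into $\Gamma^{i,j}_{C,-}(x)$ can only lower the curvature, forcing it into $\Gamma^{i,j}_{C,+}(x)$ can only raise it) and then reads off the bound in the extremal configuration $\Gamma^{i,j}_C(x)=\Gamma^{i,j}_{C,-}(x)$. The step you defer --- ``one then argues that these residuals do not worsen the bound'' --- is exactly where your plan breaks down. Carrying out your four steps on the closed form you quote (take $W(x^i)<W(x^j)$, so that $\Gamma^{i,j}_{C,-}(x)$ and $\Gamma^{i,j}_{C,+}(x)$ are disjoint) gives the exact identity
\begin{align*}
F((x^i,x^j)) &= m(x^j)\Bigl(1-\frac{W(x^i)}{W(x^j)}\Bigr)-(m(x^i)+m(x^j))\sum_{x^k\in\Gamma^{i,j}_{C,-}(x)}\frac{W(x^i)}{W(x^k)}-R,\\
R &:= m(x^i)\,\bigl|\Gamma^{i,j}_{C}(x)\setminus\Gamma^{i,j}_{C,-}(x)\bigr| + m(x^j)\,\bigl|\Gamma^{i,j}_{C,+}(x)\bigr|\,\frac{W(x^i)}{W(x^j)} + m(x^j)\sum_{x^l:\,W(x^i)<W(x^l)<W(x^j)}\frac{W(x^i)}{W(x^l)}\ \geq\ 0,
\end{align*}
and your $\mu$-estimate of the $\Gamma^{i,j}_{C,-}$-sum then yields only $F((x^i,x^j))\geq(\text{claimed right-hand side})-R$. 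The residual $R$ is subtracted from $F$, not added: it is strictly positive as soon as $\Gamma^{i,j}_C(x)\neq\Gamma^{i,j}_{C,-}(x)$ (its first summand is already at least $m(x^i)$), and nothing else in the decomposition compensates for it. So the deferred verification is not a technicality to be supplied later; it is the whole content of the inequality, and the term-by-term route cannot deliver it.

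Concretely, take four layers with $W(x^3)\leq W(x^1)<W(x^2)\leq W(x^4)$: for the edge $(x^1,x^2)$ the closed form gives $F((x^1,x^2))=-(m(x^1)+m(x^2))\frac{W(x^1)}{W(x^3)}-m(x^1)+m(x^2)\bigl(1-2\frac{W(x^1)}{W(x^2)}\bigr)$, which lies below the first right-hand side of Theorem \ref{lower}, namely $-(m(x^1)+m(x^2))\frac{W(x^1)}{W(x^3)}+m(x^2)\bigl(1-\frac{W(x^1)}{W(x^2)}\bigr)$, by the positive amount $m(x^1)+m(x^2)\frac{W(x^1)}{W(x^2)}$; the paper's own three-layer example (the case $W(x^1)\leq W(x^2)\leq W(x^3)$) shows the same phenomenon. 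This is precisely why the paper's proof does not argue from the original graph's closed form at all: it perturbs the weights so that every neighbour lies in $\Gamma^{i,j}_{C,-}(x)$, where $R=0$, and evaluates there --- the displayed estimate is exact in the stated equality case $\Gamma^{i,j}_C(x)=\Gamma^{i,j}_{C,-}(x)$ and your bookkeeping exposes the terms that obstruct it otherwise. A further small point: even when $\Gamma^{i,j}_C(x)=\Gamma^{i,j}_{C,-}(x)$, your chain is tight only if in addition all $W(x^k)$ with $x^k\in\Gamma^{i,j}_{C,-}(x)$ coincide (otherwise the estimate $\sum\leq|\Gamma^{i,j}_{C,-}(x)|\,\mu$ is strict), so the ``if and only if'' in your sketch also needs more care. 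As planned, your write-up would establish only the weakened inequality with $-R$ retained, equivalently the stated bound under the extra hypothesis $\Gamma^{i,j}_C(x)=\Gamma^{i,j}_{C,-}(x)$, and not the first inequality of Theorem \ref{lower} for a general inter-layer edge.
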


\begin{proof}
    Note that for any subset $A$ of $X$, if the number of elements in $A$ decreases by one, the number of elements in $X\setminus A$ increases by one.

    If there exists a vertex $x^k \in V_{CG}$ such that $x^k \notin \Gamma_{C,-}^{i,j}(x)$, we consider another compile graph $\bar{CG}$ where only the value of $W(x^k)$ is modified to ensure that $x^k$ is included in $\Gamma_{C,-}^{i,j}(x)$. Let the modified value of $W(x^k)$ be $\bar{W}(x^k)$. In this case, the difference in the Forman curvature is calculated as follows:
    \begin{eqnarray*}
        F^{CG}((x^i,x^j)) - F^{\bar{CG}}((x^i,x^j)) = m(x^i) \left( \cfrac{W(x^i)}{\bar{W}(x^k)} - 1\right) \geq 0,
    \end{eqnarray*}
    which implies $F^{CG}((x^i,x^j)) \geq F^{\bar{CG}}((x^i,x^j))$. Thus, the larger the cardinality of $\Gamma_{C,-}^{i,j}(x)$, the smaller is the Forman curvature.

    Conversely, if there exists a vertex $x^k \in V_{CG}$ such that $x^k \notin \Gamma_{C,+}^{i,j}(x)$, we consider another compile graph $\hat{CG}$ where only the value of $W(x^k)$ is modified to ensure that $x^k$ is included in $\Gamma_{C,+}^{i,j}(x)$. Let the modified value of $W(x^k)$ be $\hat{W}(x^k)$. In this case, the difference in Forman curvature is calculated as follows:
    \begin{eqnarray*}
        F^{CG}((x^i,x^j)) - F^{\hat{CG}}((x^i,x^j)) = m(x^j) \left( \cfrac{W(x^i)}{W(x^j)} - \cfrac{W(x^i)}{W(x^k)}\right) \leq 0,
    \end{eqnarray*}
    which implies $F^{CG}((x^i,x^j)) \leq F^{\hat{CG}}((x^i,x^j))$. Thus, the larger the cardinality of $\Gamma_{C,+}^{i,j}(x)$, the larger  the Forman curvature.

    Based on these results, the minimum value of the Forman curvature occurs when the cardinality of $\Gamma_{C,-}^{i,j}(x)$ coincides with that of $\Gamma_C^{i,j}(x)$. The minimum value is calculated as follows:
    \begin{eqnarray*}
        F((x^i,x^j)) 
        &\geq& -(m(x^i)+m(x^j)) |\Gamma^{i,j}_{C,-} (x)|\max \left\{ \cfrac{W(x^i)}{W(x^l)} \mid x^l \in \Gamma^{i,j}_{C,-}(x)\right\} \nonumber\\
        &\ & - m(x^j)\left(\cfrac{W(x^i)}{W(x^j)} -1 \right).
    \end{eqnarray*}

    Next, we examine the maximum value of $W(x^i)/W(x^j)$. Given the condition that $W(x^i) \leq W(x^j)$, the maximum value of $W(x^i)/W(x^j)$ is 1 when $W(x^i)=W(x^j)$. Thus, the lower bound of the Forman curvature was proven.
\end{proof}

\begin{theorem}
\label{upper}
    Let $CG = (V_{CG}, E_{CG}, V, \mathcal{L}, w, m)$ be a compile graph. 
    Then, for any inter-layer edge $e =(x^i, x^j) \in E_C$ with $W(x^i) \leq W(x^j)$, the Forman curvature of $e$ is represented as follows:
    \begin{eqnarray*}
        F((x^i,x^j)) \leq -|\Gamma^{i,j}_C(x)|m(x^i) + m(x^j)\left(1 - (|\Gamma_C^{i,j}(x)|+1)\cfrac{W(x^i)}{W(x^j)}\right).
    \end{eqnarray*}
    The condition for equality is $\Gamma^{i,j}_C (x)= \Gamma^{i,j}_{C,+}(x)$.
\end{theorem}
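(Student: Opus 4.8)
The plan is to reuse the two monotonicity observations established in the proof of Theorem \ref{lower} rather than redo the bookkeeping from scratch. There it was shown that, for a fixed inter-layer edge $(x^i,x^j)$ with $W(x^i)\le W(x^j)$ and fixed data $W(x^i),W(x^j),m(x^i),m(x^j)$, altering the weight $W(x^k)$ of a single neighbour $x^k\in\Gamma^{i,j}_C(x)$ so as to move it into $\Gamma^{i,j}_{C,-}(x)$ can only decrease $F((x^i,x^j))$ (the increment being $m(x^i)\big(W(x^i)/\bar W(x^k)-1\big)\ge 0$), while moving a neighbour into $\Gamma^{i,j}_{C,+}(x)$ can only increase $F((x^i,x^j))$ (the increment being $m(x^j)\big(W(x^i)/W(x^j)-W(x^i)/W(x^k)\big)\le 0$), using nonnegativity of the vertex weights throughout. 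Reading the first statement backwards, pushing a neighbour out of $\Gamma^{i,j}_{C,-}(x)$ increases $F$, and one may then push it further up into $\Gamma^{i,j}_{C,+}(x)$ to increase $F$ again; hence $F((x^i,x^j))$ is maximal exactly in the configuration where every neighbour in $\Gamma^{i,j}_C(x)$ lies in $\Gamma^{i,j}_{C,+}(x)$, i.e.\ $\Gamma^{i,j}_C(x)=\Gamma^{i,j}_{C,+}(x)$.

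The second step is to evaluate the explicit combinatorial expression for the Forman curvature of an inter-layer edge of a compile graph in this extremal configuration. When $\Gamma^{i,j}_C(x)=\Gamma^{i,j}_{C,+}(x)$ one has $\Gamma^{i,j}_C(x)\setminus\Gamma^{i,j}_{C,+}(x)=\emptyset$, while any $x^k\in\Gamma^{i,j}_{C,-}(x)$ must satisfy $W(x^i)\le W(x^k)\le W(x^i)$, so $W(x^i)/W(x^k)=1$ and the two residual sums in the formula vanish; moreover $|\Gamma^{i,j}_C(x)\setminus\Gamma^{i,j}_{C,-}(x)|=|\Gamma^{i,j}_C(x)|$ and $|\Gamma^{i,j}_{C,+}(x)|=|\Gamma^{i,j}_C(x)|$. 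Substituting these values produces exactly
\[
F((x^i,x^j))=-|\Gamma^{i,j}_C(x)|\,m(x^i)+m(x^j)\Big(1-(|\Gamma^{i,j}_C(x)|+1)\tfrac{W(x^i)}{W(x^j)}\Big),
\]
which is the asserted bound. As an independent verification, subtracting $F((x^i,x^j))$ from the right-hand side collapses the difference to $m(x^i)\sum_{x^k\in\Gamma^{i,j}_{C,-}(x)}\big(\tfrac{W(x^i)}{W(x^k)}-1\big)+m(x^j)W(x^i)\sum_{x^l\in\Gamma^{i,j}_C(x)\setminus\Gamma^{i,j}_{C,+}(x)}\big(\tfrac{1}{W(x^l)}-\tfrac{1}{W(x^j)}\big)$, each term of which is nonnegative because $W(x^k)\le W(x^i)$ on $\Gamma^{i,j}_{C,-}(x)$ and $W(x^l)<W(x^j)$ off $\Gamma^{i,j}_{C,+}(x)$.

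For the equality statement, both of the sums just displayed must vanish. The second is a sum of strictly positive terms over $\Gamma^{i,j}_C(x)\setminus\Gamma^{i,j}_{C,+}(x)$ (there $W(x^l)<W(x^j)$), so it vanishes only when $\Gamma^{i,j}_C(x)=\Gamma^{i,j}_{C,+}(x)$, and this same condition also annihilates the first sum by the computation in the previous paragraph; hence equality holds iff $\Gamma^{i,j}_C(x)=\Gamma^{i,j}_{C,+}(x)$, as claimed. I expect the only mildly delicate point to be this equality analysis — in particular the degenerate case $W(x^i)=W(x^j)$, in which $\Gamma^{i,j}_{C,-}(x)$ and $\Gamma^{i,j}_{C,+}(x)$ may overlap — but it is absorbed into the observation that membership in $\Gamma^{i,j}_{C,-}(x)$ forces $W(x^i)/W(x^k)=1$ in the extremal configuration; the inequality itself is routine once the monotonicity lemmas from the proof of Theorem \ref{lower} are in hand.
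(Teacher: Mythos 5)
Your proof is correct and follows essentially the same route as the paper: it reuses the two monotonicity observations from the proof of Theorem \ref{lower} to conclude that the curvature is maximized exactly when $\Gamma^{i,j}_C(x)=\Gamma^{i,j}_{C,+}(x)$, and then evaluates the explicit compile-graph formula in that extremal configuration. Your additional direct verification (subtracting $F((x^i,x^j))$ from the claimed bound and checking the resulting sums are nonnegative term by term) is a supplement the paper omits, and it handles the equality condition, including the degenerate case $W(x^i)=W(x^j)$, cleanly.
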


\begin{proof}
    In the proof of Theorem \ref{lower}, we established the following relationships:
    \begin{enumerate}
        \item The smaller the cardinality of $\Gamma_{C,-}^{i,j}(x)$, the larger the Forman curvature.
        \item The larger the cardinality of $\Gamma_{C,+}^{i,j}(x)$, the larger the Forman curvature.
    \end{enumerate}
    Thus, the Forman curvature attains its maximum value when $\Gamma^{i,j}_C (x)= \Gamma^{i,j}_{C,+}(x)$. The maximum value is computed as follows:
    \begin{eqnarray*}
        F((x^i,x^j)) \leq -|\Gamma^{i,j}_C(x)|m(x^i) + m(x^j)\left(1 - (|\Gamma_C^{i,j}(x)|+1)\cfrac{W(x^i)}{W(x^j)}\right).
    \end{eqnarray*}
    This completes the proof.
\end{proof}

\subsubsection{Example}
To illustrate further, we compile graphs with two or three layers and calculate the Forman curvature of the representative inter-layer and intra-layer edges.
\begin{example}
    Let $CG = (V_{CG}, E_{CG}, V, \mathcal{L}, w, m)$ be a compile graph with $L=2$. 
\end{example}
The Forman curvature of any inter-layer edge $e =(x^1, x^2) \in E_C$ with $W(x^1) \leq W(x^2)$ is calculated as
    \begin{eqnarray*}
        F((x^1,x^2))= m(x^2)\left(1 - \cfrac{W(x^1)}{W(x^2)}\right) \geq 0.
    \end{eqnarray*}
Conversely, the Forman curvature of any intra-layer edge $e = (x^1, y^1) \in E_A$ is given by
    \begin{eqnarray*}
        F(e) = F^1((x,y)) - m(x^1)\frac{\sqrt{w(e)}}{\min \left\{W(x^1), W(x^2) \right\})} - m(y^1)\cfrac{\sqrt{w(e)}}{\min \left\{W(y^1), W(y^2) \right\})}.
    \end{eqnarray*}

\begin{example}
    Let $CG = (V_{CG}, E_{CG}, V, \mathcal{L}, w, m)$ be a compile graph with $L=3$. 
\end{example}
    The Forman curvature of any inter-layer edge $e =(x^1, x^2) \in E_C$ with $W(x^1) \leq W(x^2)$ is then expressed as 
    \begin{eqnarray*}
        &\ &F(e)= -m(x^1) \left(1 \vee \cfrac{W(x^1)}{W(x^3)} \right) + m(x^2)\left(1 - \cfrac{W(x^1)}{W(x^2)} - \cfrac{W(x^1)}{W(x^2)}\vee \cfrac{W(x^1)}{W(x^3)} \right)\\
        &=& 
        \begin{cases}
            -m(x^1) \cfrac{W(x^1)}{W(x^3)} + m(x^2)(1 - \cfrac{W(x^1)}{W(x^2)} - \cfrac{W(x^1)}{W(x^3)}),& \text{if}\ W(x^3) \leq W(x^1)\leq W(x^2),\\
            -m(x^1) + m(x^2)(1 - \cfrac{W(x^1)}{W(x^2)} - \cfrac{W(x^1)}{W(x^3)}),& \text{if}\ W(x^1)<W(x^3)<W(x^2),\\
            -m(x^1) + m(x^2)(1 - 2\cfrac{W(x^1)}{W(x^2)}),& \text{if}\ W(x^1) \leq W(x^2) \leq W(x^3).
        \end{cases}
\end{eqnarray*}
Conversely, the Forman curvature of any intra-layer edge $e = (x^1, y^1) \in E_A$ is given by
    \begin{eqnarray*}
        F(e) &=& F^1((x,y)) - m(x^1)\frac{\sqrt{w(e)}}{\min \left\{W(x^1), W(x^2) \right\})} - m(x^1)\frac{\sqrt{w(e)}}{\min \left\{W(x^1), W(x^3) \right\})}\\
        &\ & - m(y^1)\cfrac{\sqrt{w(e)}}{\min \left\{W(y^1), W(y^2) \right\})}- m(y^1)\cfrac{\sqrt{w(e)}}{\min \left\{W(y^1), W(y^3) \right\})}.
    \end{eqnarray*}
These examples demonstrate that the Forman curvature of an inter-layer edge $e=(x^i, x^j)$ in a compile graph depends on whether each $W(x^k)\in \Gamma_C^i,j(x)$ belongs to $\Gamma_{C,+}^i,j(x)$ or $\Gamma_{C,-}^i,j(x)$. Additionally, the Forman curvature of an intra-layer edge is highly influenced by the Forman curvature of the $i$-graph and can serve as a measure of edge importance within the $i$-graph, as well as the Forman curvature of the graph.

Various discrete curvature notions have been studied in single-layer networks, such as the Ollivier–Ricci curvature and Bakry–Émery Ricci curvature. However, their direct extension to multilayer networks remains nontrivial. The main difficulty lies in defining consistent probability measures or diffusion processes across layers, which is a prerequisite for transportation-based definitions, such as the Ollivier–Ricci. In contrast, the Forman curvature is locally defined using only node degrees and edge weights, allowing a straightforward adaptation to multilayer settings without requiring global consistency assumptions.
Therefore, while a direct comparison with other curvatures in multilayer networks is not currently possible, we highlight the differences between them in the single-layer cases. For instance, the Ollivier–Ricci curvature often aligns with the notions of robustness under optimal transport, whereas the Forman curvature provides a more combinatorial, locally computable indicator. This motivates our choice of the Forman curvature as a practical and interpretable tool for multilayer analysis.

\section{Methodology}
If the primary goal of analyzing a multilayer network is to identify vertices that maintain consistent importance across all layers, calculating the importance within each layer separately may suffice. This approach reduces the need for a dedicated multilayer framework. Conversely, identifying vertices whose importance varies between layers and minimizing these differences requires considering the multilayer network as a whole. For example, identifying a location within a transportation network that is crucial for one public transit system but marginal for another can highlight specific points warranting focus for future development. Therefore, we initiate our discussion with a review of weight normalization, a technique employed to ensure that the scales of each layer are appropriately aligned. Subsequently, a novel metric, comprehensive evaluation, and graph algorithm are introduced.

\subsection{Weight Normalization}
In the context of multiplex networks, the normalization of edge weights constitutes a pivotal step preceding the computation of the Forman curvature. Two representative strategies were considered in this study. The first method is referred to as "mean normalization," in which the weights are rescaled so that their average equals one. The second method is referred to as "bounded scaling," in which the weights are mapped to a fixed interval, for example, $[1,10]$.

In the case of small graphs with relatively narrow weight ranges (e.g., spanning from $0.5$ to $5$), both methods yield similar curvature distributions. For instance, in Zachary's karate club graph, the histograms of curvature values under the two schemes are almost indistinguishable (Supplementary Figure \ref{karate club}). However, for networks with larger sizes and highly heterogeneous weights (e.g., ranging from $0.1$ to $1000$), the differences become pronounced. In a classical Erd\"{o}s--R\'{e}nyi random graph ($G(200, 0.5)$), mean normalization results in curvature values that are skewed over a broad interval (e.g., $[-500, 0]$), whereas bounded scaling produces a more compact distribution (e.g., $[-350, -50]$) (Supplementary Figure \ref{random graph}). The random graph $G(n,p)$ is defined by $n$ vertices, and each possible edge between pairs of vertices appears independently with a probability $p$.

Consequently, the selection of normalization should be congruent with the research objectives. In scenarios where the objective is to accentuate anomalous edges or identify bottlenecks within the network, mean normalization emerges as the optimal approach. Conversely, if the objective is to ensure stability and comparability across the entire graph, bounded scaling within a predefined interval, such as $[1,10]$, is more suitable. 

\begin{figure}[H]
     \centering
     \includegraphics[scale=0.4]{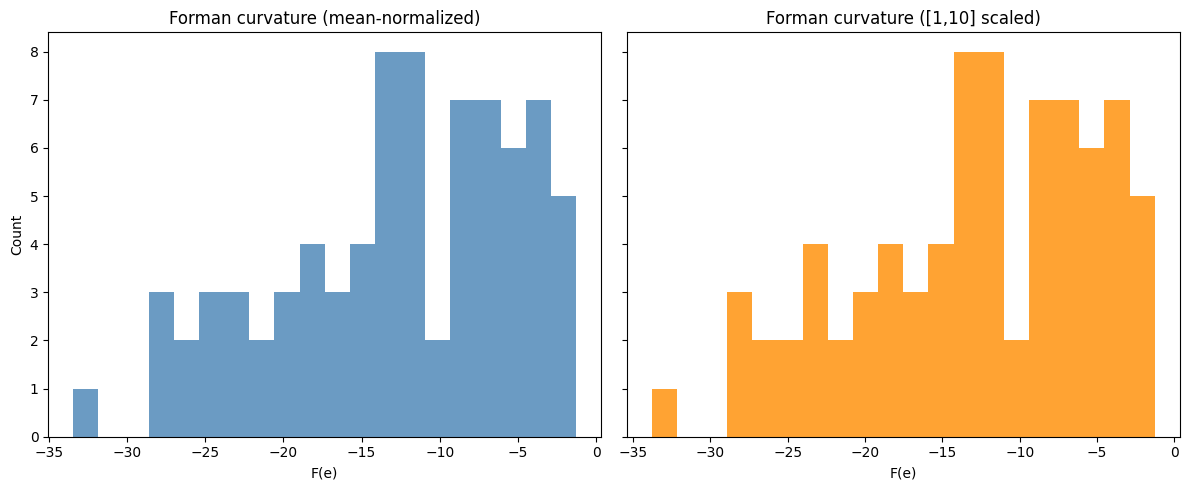} 
     \caption{The histogram of the Forman curvature on the Karate club graph.}
     \label{karate club}
\end{figure}

\begin{figure}[H]
     \centering
     \includegraphics[scale=0.4]{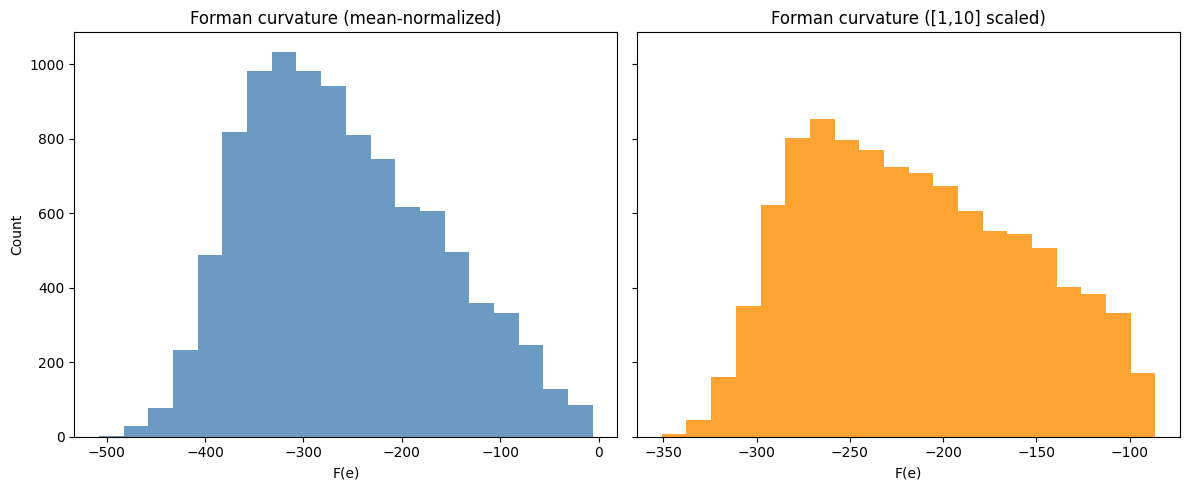} 
     \caption{The histogram of the Forman curvature on the random graph $G(200,0.5)$.}
     \label{random graph}
\end{figure}

\subsection{Graph Algorithm}
Before outlining the algorithm steps, we define a new indicator based on the inter-layer Forman curvature.
\begin{definition}
    Let $CG = (V_{CG}, E_{CG}, V, \mathcal{L}, w, m)$ be a compile graph. For a vertex $x \in V$, a \textit{comprehensive evaluation} of $x$ is defined as
    \begin{eqnarray*}
        CE(x) = \sum_{i,j \in \mathcal{L}: i<j} F((x^i,x^j)).
    \end{eqnarray*}
\end{definition}
\begin{remark}
\label{variance}
    For a vertex $x \in V$, consider the layers ordered such that $W(x_1) \leq W(x_2) \leq \cdots \leq W(x_L)$. Subsequently, by applying Theorem \ref{lower}, the lower bound for the comprehensive evaluation of $x$ is represented as follows:
    \begin{eqnarray*}
        CE(x) &=& \sum_{i,j \in \mathcal{L}:i<j} F((x^i,x^j))\\
        &\geq& -\sum_{i,j \in \mathcal{L}:i<j} (m(x^i)+m(x^j)) |\Gamma^{i,j}_{C,-} (x)|\max \left\{ \cfrac{W(x^i)}{W(x^l)} \mid x^l \in \Gamma^{i,j}_{C,-} (x)\right\}\\
        &=& -\sum_{i,j \in \mathcal{L}:i<j}(m(x^i)+m(x^j)) |\Gamma^{i,j}_{C,-} (x)| \cfrac{W(x^i)}{W(x^1)}.
    \end{eqnarray*}
    Alternatively, for the case of $W(x^1) = \cdots = W(x^L)$, the comprehensive evaluation of $x$ is denoted by $CE^\text{uni}(x)$ and calculated as
    \begin{eqnarray*}
        CE^\text{uni}(x) &=& \sum_{i,j \in \mathcal{L}} F((x^i,x^j))\\
        &=& - \sum_{i,j \in \mathcal{L}} (m(x^i) + m(x^j)).
    \end{eqnarray*}
    Therefore, for a given vertex $x \in V$, a larger difference between $CE(x)$ and $CE^\text{uni}(x)$ suggests a greater dispersion among the $W(x^i)$ values for the vertex. Note that while the variance and standard deviation can be used as indicators of variation, they might not be suitable for comparing the $W(x^i)$ dispersion across different vertices, as the representative value is different for each vertex.
\end{remark}

\begin{claim}\ 
Input a compile graph $CG = (V_{CG}, E_{CG}, V, \mathcal{L}, w, m)$.
\begin{enumerate}[label=\textbf{Step \arabic*.}]
    \item Perform weight normalization for each $G^l$ ($l \in [1, L]$).
    \item Calculate the difference between $CE(x) = \sum_{i<j} F((x^i,x^j))$ and $CE^\text{uni}(x)$ for each vertex.
    \item Compute $\sum_{y}F((x_0^i,y^i))$ for each layer $i$ of the vertex $x_0$ identified as having a large "Difference value" in Step 1.
    \item Compute $F((x_0^{i_0},y^{i_0}))$ for each intra-layer edge $e=(x_0^{i_0},y^{i_0})$ for a layer $i_0$ with identified in Step 2 as having a large value and extract an edge $e_0=(x_0^{i_0},y^{i_0})$ that exhibits a large curvature value.
\end{enumerate}
\end{claim}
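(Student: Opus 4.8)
The statement is an algorithm rather than an identity, so the content to be established is that it is well defined on every compile graph, that it halts, that the quantities it reports --- the ``Difference value'' of Step~2 and the curvature sums of Steps~3--4 --- are exactly the theoretically meaningful ones, and that the total work is polynomial in the input size. The plan is to walk through the four steps in order, carrying a running operation count, and to invoke Remark~\ref{variance} and Theorems~\ref{lower}--\ref{upper} to interpret the output.

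Step~1 rescales each intra-layer weight once, costing $O(|E_A|)$, and is well defined as soon as a target scale (mean one, or a fixed interval) is fixed; afterwards all intra-layer weights are positive, which is all the definitions of $W$ and of the Forman curvature require. For Step~2 I would first precompute $W(x^i)$ for every state vertex by one pass over $\Gamma_A(x^i)$ --- another $O(|E_A|)$ --- and observe that $CE^{\mathrm{uni}}(x)=-\sum_{i<j}(m(x^i)+m(x^j))$ is immediate from Remark~\ref{variance}. To obtain $CE(x)=\sum_{i<j}F((x^i,x^j))$, for each fixed $x$ I would sort the $L$ numbers $W(x^1),\dots,W(x^L)$ once, in $O(L\log L)$ time, and form prefix sums of the $1/W(x^k)$ along that order. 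After this, the closed form for an inter-layer edge of a compile graph (Definition~3.7) evaluates each $F((x^i,x^j))$ in $O(1)$ time, because a sum such as $\sum_{x^k\in\Gamma_C(x^i)}\bigl(1/W(x^i)\vee 1/W(x^k)\bigr)$ splits, along the sorted order, into a prefix sum plus a count times $1/W(x^i)$; summing over the $\binom{L}{2}$ pairs incident to $x$ and then over all $n$ vertices gives $O(|E_A|+nL^2)$ for Step~2. Hence $CE(x)-CE^{\mathrm{uni}}(x)$ is computed exactly, and by Remark~\ref{variance} it is a genuine, vertex-comparable measure of the dispersion of $\{W(x^i)\}_{i}$ --- precisely what Step~2 is meant to rank vertices by.

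Steps~3 and~4 are local computations around the distinguished vertex $x_0$ and layer $i_0$. For each layer $i$, Step~3 forms $\sum_{y^i\in\Gamma_A(x_0^i)}F((x_0^i,y^i))$, where each summand is the intra-layer Forman curvature of Definition~3.6, whose evaluation is linear in the degrees of its two endpoints plus $L$ (the monolayer part $F^i$ plus the $\Gamma_C$ correction terms); summed over the $\deg_i(x_0)$ neighbours and the $L$ layers this remains polynomial in the layer sizes and in $L$. Step~4 performs one further pass of the same kind over the intra-layer edges at $x_0^{i_0}$ and returns an extremizer $e_0$. Termination is then immediate, since each step is a finite sequence of arithmetic operations on a finite structure, and the output $(x_0,i_0,e_0)$ is well defined once ties among ``large'' values are broken by a fixed rule (e.g.\ smallest layer or vertex index).

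The step I expect to need the most care is Step~2: one has to keep the $\Gamma^{i,j}_{C,-}(x)$ and $\Gamma^{i,j}_{C,+}(x)$ bookkeeping of Definition~3.7 consistent across all $\binom{L}{2}$ pairs incident to a vertex --- reusing the same sorted order of the $W(x^k)$ --- so that the reported value is exact and the running time stays polynomial in $n$ and $L$ instead of growing combinatorially in $L$. A secondary, more honest, point is that this is a heuristic: ``large Difference value'' and ``large curvature value'' are not sharp thresholds, so the real content of the claim is well-definedness, termination, an $O(|E_A|+nL^2)$-type complexity bound, and the structural interpretation supplied by Remark~\ref{variance} and the examples following Theorems~\ref{lower}--\ref{upper}, rather than an optimality guarantee; I would state the final result in those terms.
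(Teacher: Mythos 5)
Your proposal is sound, and since the statement is an algorithm rather than a theorem, your reading of what has to be ``proved'' --- well-definedness, termination, exactness of the reported quantities, and the structural meaning of the output --- is legitimate. The paper itself does less: its justification is the short paragraph following the algorithm, which (i) interprets the ``Difference value'' $CE(x)-CE^{\text{uni}}(x)$ as a measure of the dispersion of the $W(x^i)$ via Remark~\ref{variance}, exactly as you do, and (ii) justifies Steps~3--4 by appealing to Remark~\ref{Forman property}: since edges/vertices with \emph{small} Forman curvature are the ones critical for network integrity, a layer in which $\sum_y F((x_0^i,y^i))$ is \emph{large} is a layer where $x_0$ is unimportant, and the intra-layer edge with the largest curvature is the weak point to be extracted. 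Your write-up adds genuinely new content the paper does not attempt --- the $O(|E_A|+nL^2)$ complexity bound obtained by sorting the $W(x^k)$ once per vertex and reusing prefix sums across all $\binom{L}{2}$ pairs, the observation that normalization keeps all weights positive so $W$ and $F$ are defined, and the explicit tie-breaking needed for ``large'' to be well defined --- and this is where your approach buys more than the paper's. Where you are lighter than the paper is precisely (ii): you treat Steps~3--4 as mere local computations returning an extremizer and cite only the examples after Theorems~\ref{lower}--\ref{upper}, but the semantic content of those steps (why a large curvature sum flags the layer where $x_0$ is least important, and why the maximal-curvature edge is the relevant ``weak'' edge) rests on the monotone relationship between Forman curvature and edge importance recorded in Remark~\ref{Forman property} and the sensitivity analysis; you should invoke that explicitly. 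Finally, note that the statement's own cross-references are off by one (the ``Difference value'' is produced in Step~2, not Step~1, and the layer is selected in Step~3, not Step~2); your consistent renumbering silently corrects this, which is fine but worth flagging rather than doing tacitly.
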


First, Step 1 calculates the difference between $CE(x)$ and $CE^\text{uni}(x)$ for each vertex $x$. This allows for a comparison of the degree of variability among the $W(x^i)$ values for that vertex (as discussed in Remark \ref{variance}). Next, Step 2 examines a vertex $x_0$, which is identified as having a large difference value in Step 1. By comparing the sum of the incident intra-layer edge Forman curvatures for each layer $i$, this step helps distinguish layers where $x_0$ is important from those where it is not (see Remark \ref{Forman property}). Finally, Step 3 focuses on a specific layer $i_0$ where $x_0$ is not important  by comparing the Forman curvature of each edge.

\section{Simulation}
\subsection{Weakness Identification in Compile Graphs}
This section focuses on the primary objective of this study: weakness identification in compiled graphs. Using the Forman curvature and comprehensive evaluation, we demonstrated the ability to efficiently identify vulnerable points in complex multiplex graphs, thereby clarifying the practical significance of the proposed method. The base graph is a compiled graph constructed from three random graphs $G(n,p)$ with progressively modified values $p$. A random number between 0.01 and 1 is assigned as the weight of each vertex. Similarly, a random number between 1 and 10 is assigned as the weight of each edge. This weight assignment indicates that weight normalization was performed using bounded scaling for a dataset. These random graphs are then combined to construct a compile graph (Figure \ref{data_desc}).

\begin{example}
$CG = CG^{258}$ is the compile graph constructed from\newline $\left\{G(25,0.2), G(25,0.5), G(25,0.8)\right\}$.
\end{example}
\begin{figure}[H]
     \centering
     \includegraphics[scale=0.45]{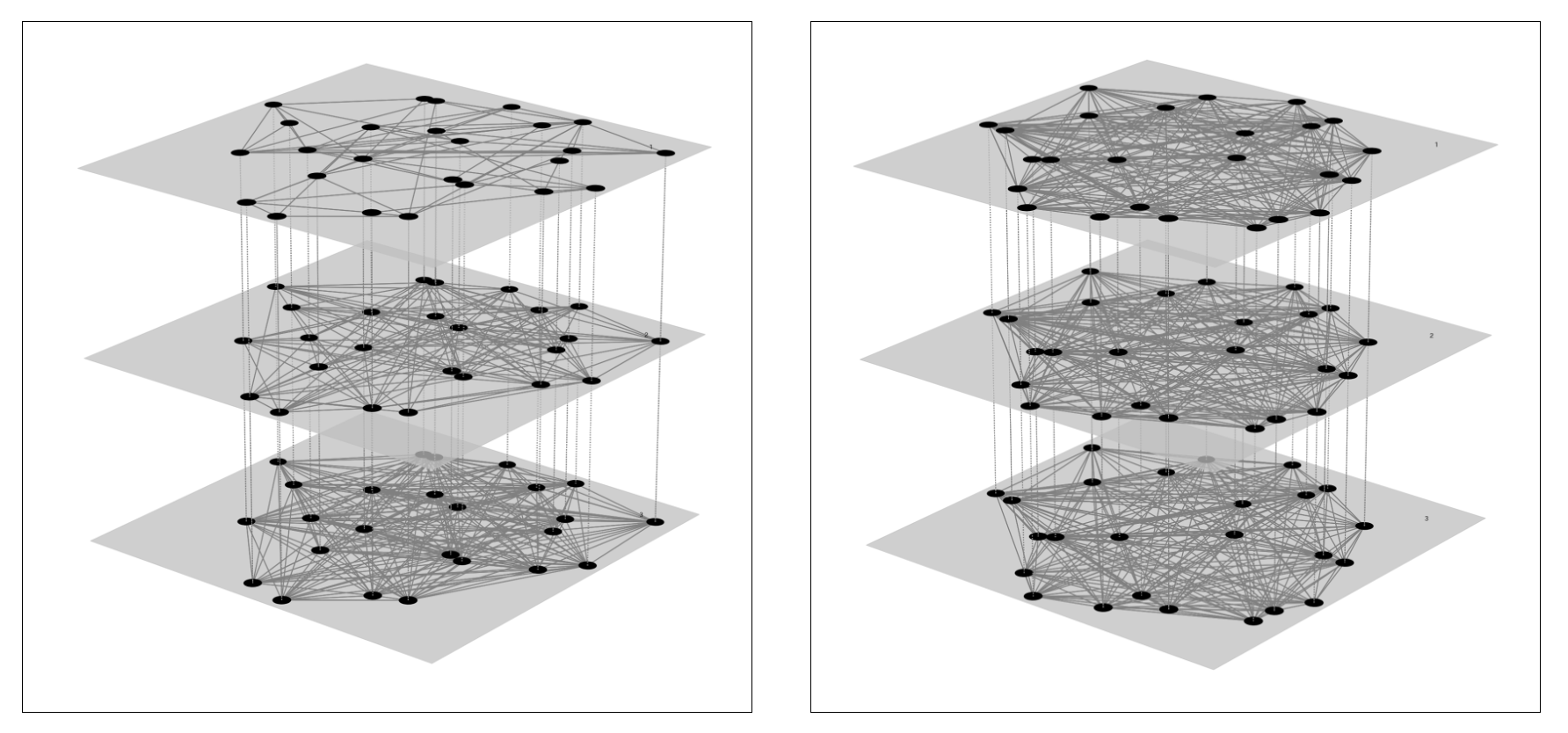} 
     \caption{\newline Left: $CG^{258}$ (Compile graph of $\left\{G(25,0.2), G(25,0.5), G(25,0.8)\right\}$).\newline Right: $CG^{888}$ (Compile graph of $\left\{G(25,0.8), G(25,0.8), G(25,0.8)\right\}$).}
     \label{data_desc}
\end{figure}
The results from applying this graphing algorithm show that Vertex 10 has the largest variation in its $W(x^i)$ values across the layers (Figure \ref{258step1}). Moreover, the sum of the intra-layer edge Forman curvatures for Vertex 10, when calculated for Layers 1, 2, and 3, is the largest in Layer 1. This leads to the conclusion that Vertex 10 is the least important in Layer 1 (Figure \ref{258step2}). Finally, within Layer 1, the Forman curvature for each edge between Vertices 10 and 20 has the largest curvature value (Figure \ref{258step3}).
 \begin{figure}[H]
     \centering
     \includegraphics[scale=0.45]{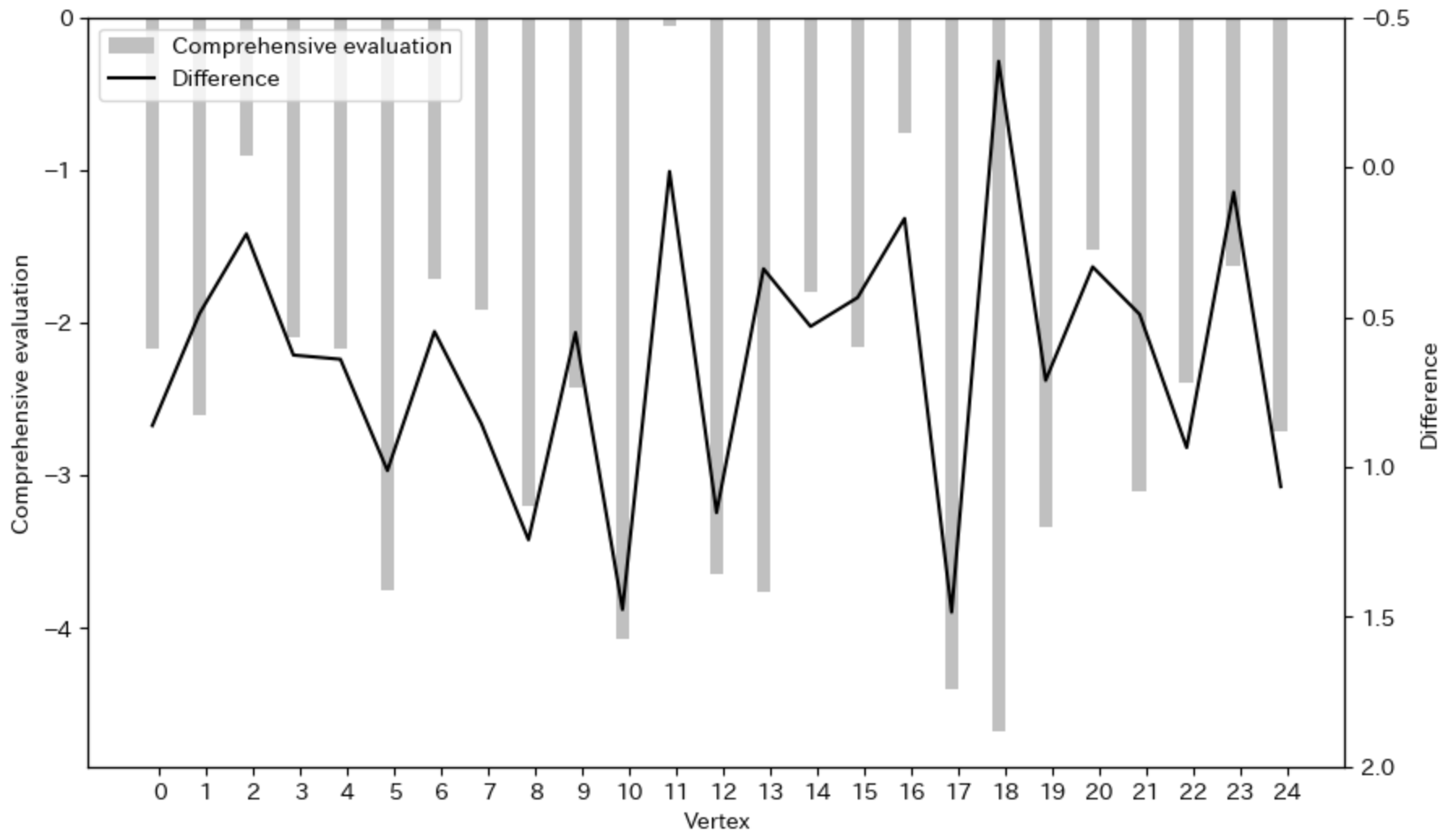} 
     \caption{Results of running Step 1 for $CG^{258}$. The "Difference" value represents the difference between $CE(x)$ and $CE^\text{uni}(x)$.}
     \label{258step1}
 \end{figure}

 \begin{figure}[H]
     \centering
     \includegraphics[scale=0.6]{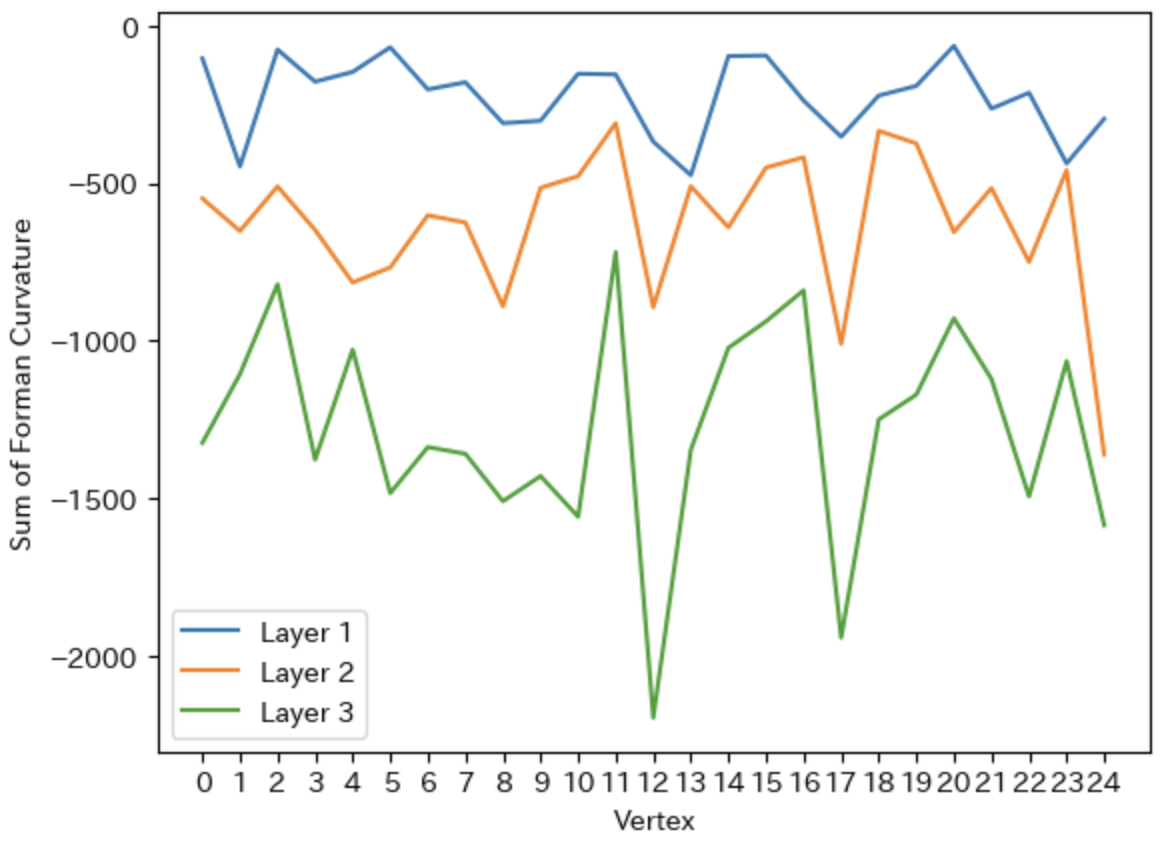} 
     \caption{Results of running Step 2 for $CG^{258}$.}
     \label{258step2}
 \end{figure}

 \begin{figure}[H]
     \centering
     \includegraphics[scale=0.4]{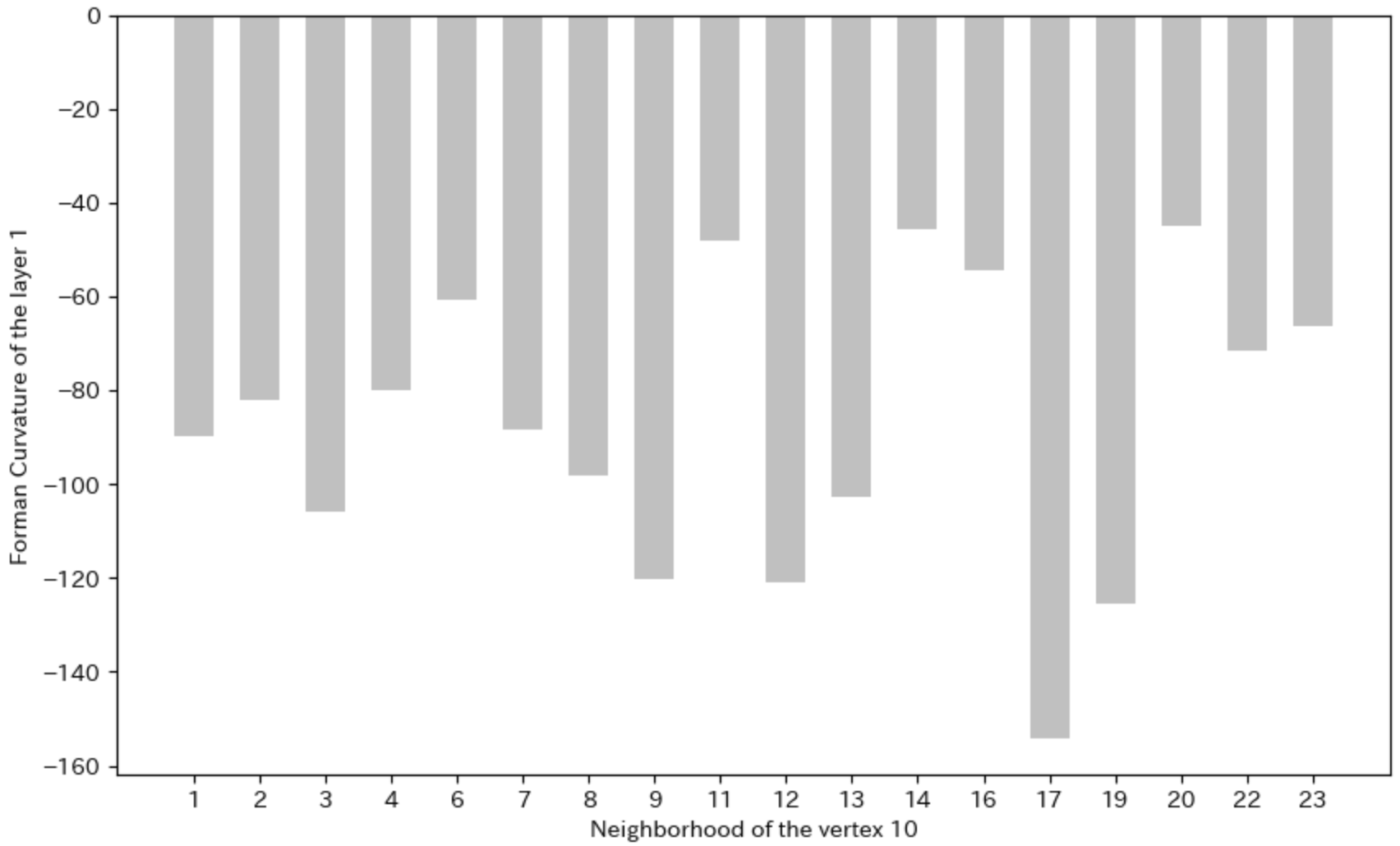} 
     \caption{Results of running Step 3 for Vertex 10 and Layer 1 in $CG^{258}$.}
     \label{258step3}
 \end{figure}

\begin{example}
$CG = CG^{888}$ is the compile graph constructed from\newline $\left\{G(25,0.8), G(25,0.8), G(25,0.8)\right\}$.
\end{example}
The results from this graphing algorithm indicate that Vertex 8 exhibits the largest $W(x^i)$ value across the layers (Figure \ref{888step1}). Additionally, the sum of the Forman curvatures of the intra-layer edges for Vertex 8 across Layers 1, 2, and 3 revealed that the largest value occurred in Layer 3. This leads to the conclusion that Vertex 10 is the least important in Layer 3 (Figure \ref{888step2}). Finally, within Layer 3, the calculation of the Forman curvature for each edge connected to Vertex 8 shows that the edge between Vertices 8 and 22 has the largest curvature value (Figure \ref{888step3}). 
 \begin{figure}[H]
     \centering
     \includegraphics[scale=0.45]{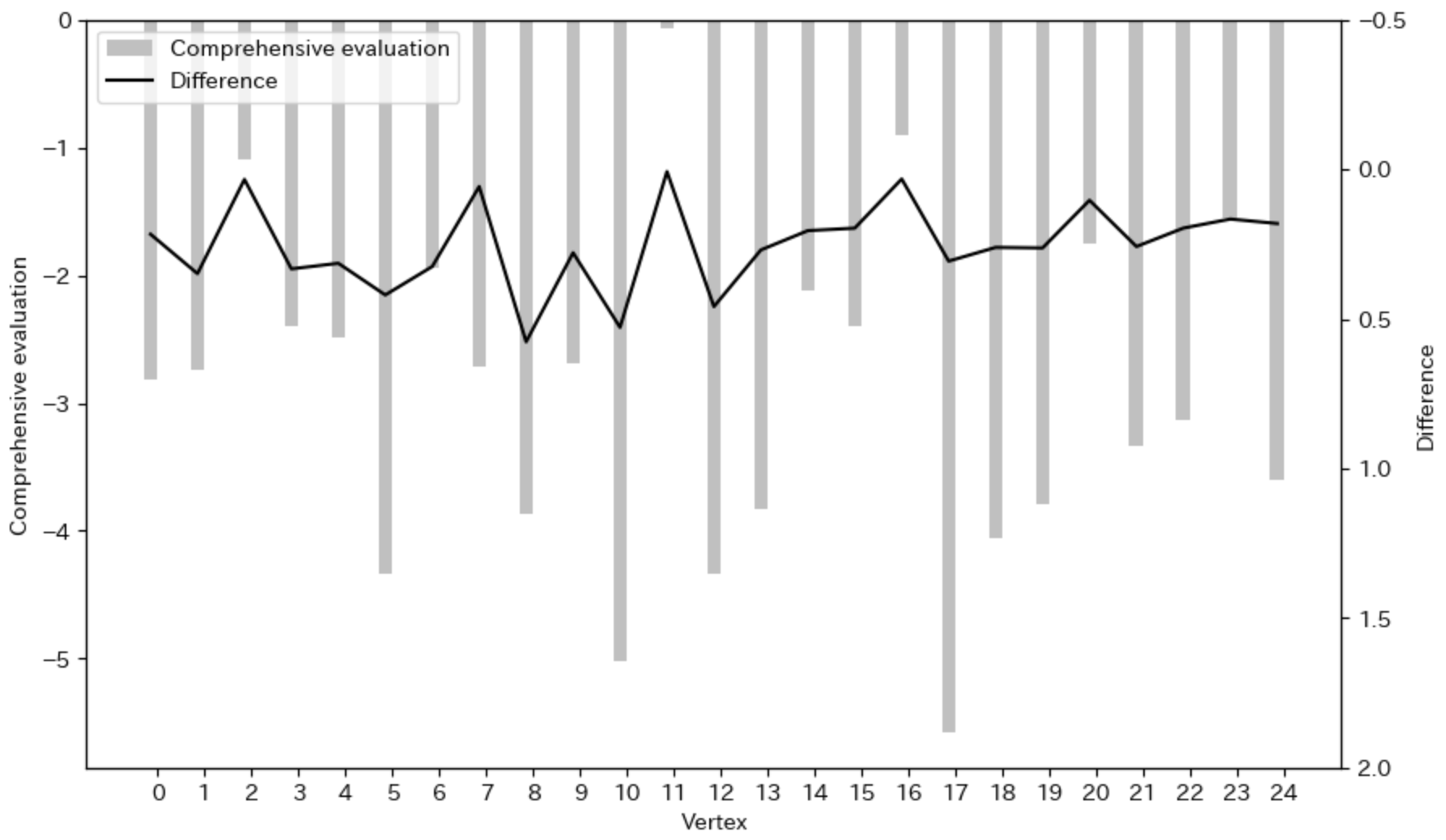} 
     \caption{Results of running Step 1 for $CG^{888}$.}
     \label{888step1}
 \end{figure}

 \begin{figure}[H]
     \centering
     \includegraphics[scale=0.6]{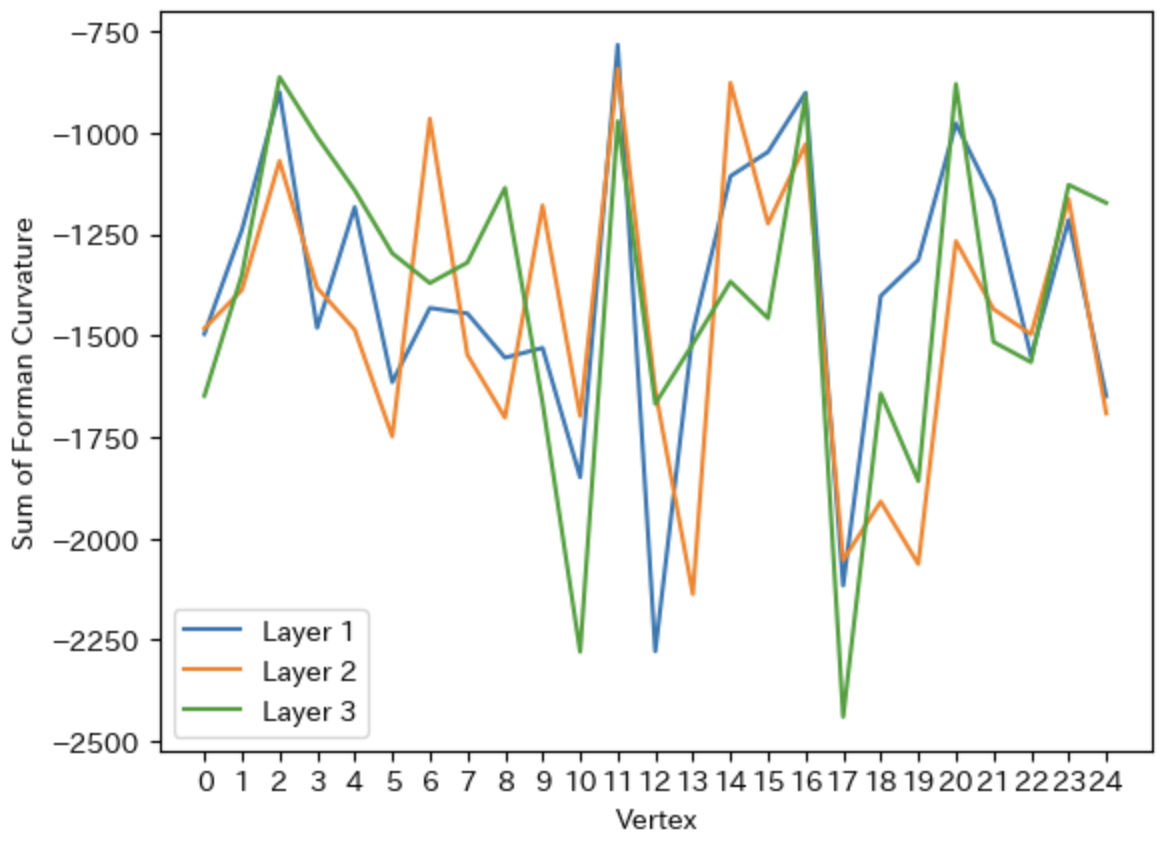} 
     \caption{Results of running Step 2 for $CG^{888}$.}
     \label{888step2}
 \end{figure}
 
 \begin{figure}[H]
     \centering
     \includegraphics[scale=0.5]{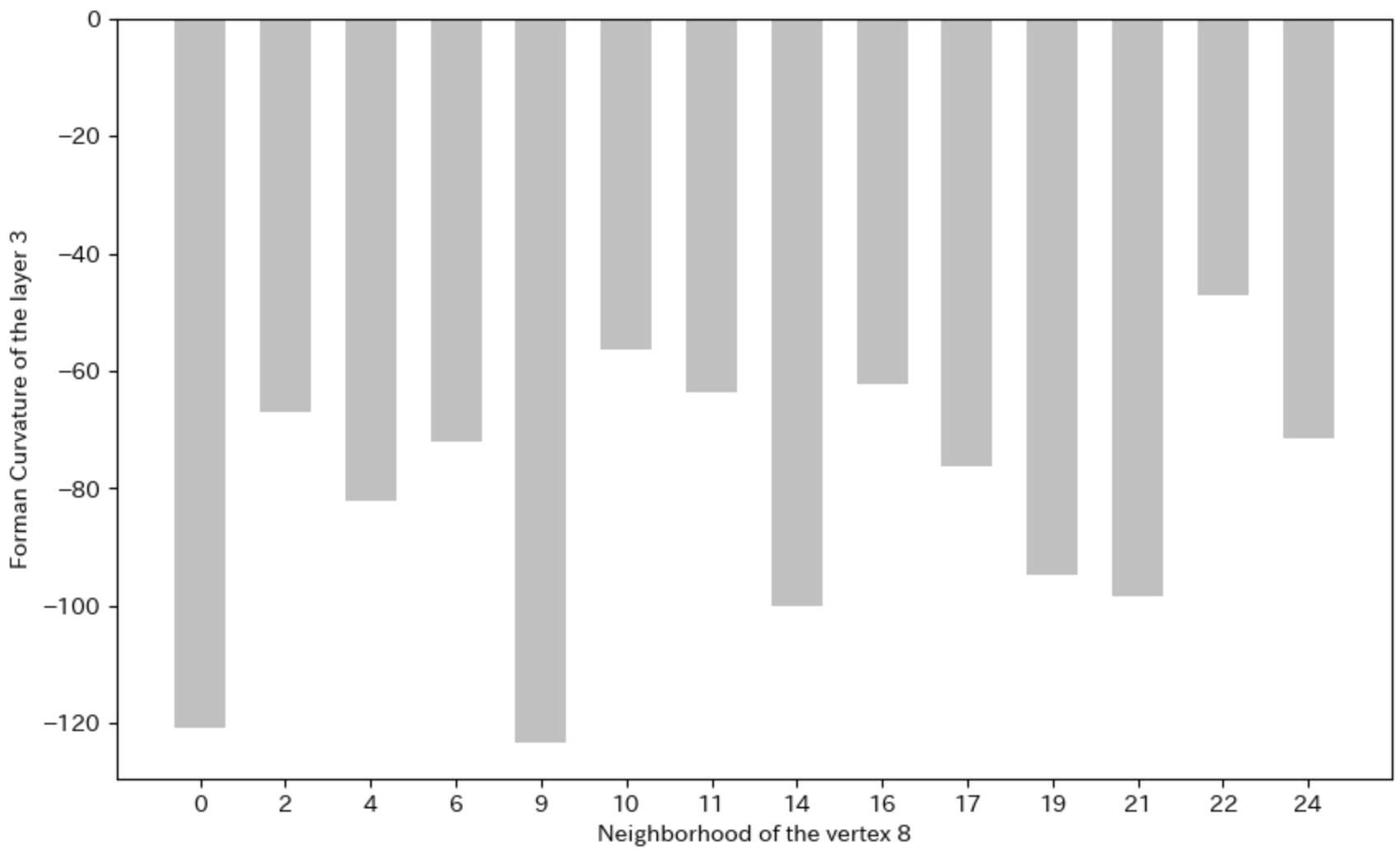} 
     \caption{Results of running Step 3 for Vertex 8 and Layer 3 in $CG^{888}$.}
     \label{888step3}
 \end{figure}

A comparison of the two examples shows that when the graph structure of each layer is similar, the overall "difference value" tends to be smaller for all vertices $x$, and the variance of these difference values across vertices is also smaller. Conversely, when the graph structures of the layers differ significantly, the "difference values" are generally larger, and their variance across vertices is also larger. Therefore, comparing these difference values is useful not only for clarifying the degree of variation in  $W(x^i)$ for each vertex, but also for understanding the graph structure of the entire multilayer network.

\subsection{Classification Performance Comparison}
In this section, we conduct comparative experiments based on the classification performance to further substantiate the effectiveness of the proposed method. By comparing it with existing metrics and methods for classification tasks, we provide supplementary validation of the comprehensive evaluation (CE) introduced in this study.
\subsubsection{Comparison with other graph metrics}
To evaluate the effectiveness of the proposed CE features based on Forman curvature, a comparison is made of their classification performance against traditional graph metrics such as degree, clustering coefficient, and betweenness centrality. To this end, we implemented a 5-fold cross-validation procedure employing a Random Forest classifier. Our analytical framework incorporated both accuracy (Acc) and macro-averaged F1 score (F1\_macro), with the objective of addressing potential class imbalances. The base graph is a compiled graph constructed from three Zachary's karate club graphs with progressively modified edge weights.

The results are summarized in Table \ref{tab:ce_vs_trad}. The utilization of CE features alone led to an accuracy of $0.4476 \pm 0.1227$ and an F1\_macro of $0.4278 \pm 0.1412$. In contrast, traditional graph metrics alone attained $0.4667 \pm 0.1292$ (Acc) and $0.4417 \pm 0.1481$ (F1\_macro). Notably, the integration of CE with conventional metrics resulted in a significant enhancement, yielding $0.5333 \pm 0.1292$ (Acc) and $0.4933 \pm 0.1501$ (F1\_macro). This finding suggests that the CE captures complementary information that is not present in conventional graph descriptors.

Feature importance analysis further corroborates this conclusion ( Table \ref{tab:feature_importance}). Among the top 10 features of the combined model, the CE features contributed significantly. For instance, CE\_stat\_3 and CE\_stat\_0 ranked second and third in importance, respectively, highlighting their relevance in discriminating between graph classes. While traditional metrics retained their significance, as evidenced by the top ranking of TRAD \_stat \_5, the integration of CE and traditional metrics significantly augmented the classifier's performance.

The findings indicate that incorporating the CE distribution of Forman curvature enhances the discriminatory power of standard graph metrics, thereby substantiating its value as a feature in graph classification.

\begin{table}[H]
\centering
\caption{Classification performance comparison between CE features, traditional graph metrics, and their combination using Random Forest (5-fold CV). Accuracy (Acc) and macro F1 score (F1\_macro) are reported as mean $\pm$ standard deviation.}
\label{tab:ce_vs_trad}
\begin{tabular}{lcc}
\hline
Features & Acc & F1\_macro \\
\hline
CE only       & 0.4476 $\pm$ 0.1227 & 0.4278 $\pm$ 0.1412 \\
Traditional only & 0.4667 $\pm$ 0.1292 & 0.4417 $\pm$ 0.1481 \\
Both (CE + Traditional) & 0.5333 $\pm$ 0.1292 & 0.4933 $\pm$ 0.1501 \\
\hline
\end{tabular}
\end{table}

\begin{table}[H]
\centering
\caption{Top 10 feature importance in the combined CE + traditional model.}
\label{tab:feature_importance}
\begin{tabular}{lc}
\hline
Feature & Importance \\
\hline
TRAD\_stat\_5 & 0.1492 \\
CE\_stat\_3   & 0.1263 \\
CE\_stat\_0   & 0.0776 \\
TRAD\_stat\_2 & 0.0764 \\
CE\_stat\_2   & 0.0762 \\
CE\_stat\_4   & 0.0758 \\
CE\_stat\_5   & 0.0752 \\
TRAD\_stat\_4 & 0.0655 \\
CE\_stat\_6   & 0.0607 \\
CE\_stat\_11  & 0.0601 \\
\hline
\end{tabular}
\end{table}

\subsubsection{Comparison with the Weisfeiler–Lehman}
To evaluate whether CE improves existing graph classification pipelines, we compared our approach with the Weisfeiler–Lehman (WL) subtree kernel. The experiments were conducted on synthetic graph datasets generated to contain class-distinguishing structural differences that are not easily captured by subtree patterns alone. Specifically, we considered two classes of graphs with the same degree distribution but differing higher-order connectivity and curvature-related properties (e.g., presence of bottlenecks, bridges, or clustering variations). Each dataset contained 300 graphs that were balanced across the two classes.
Both WL and CE were combined with a Random Forest classifier, and the performance was evaluated using repeated 5-fold cross-validation (20 repetitions, yielding 100 train/test splits). This design ensured that the results were not sensitive to a particular partitioning of data.

The results are summarized in Table \ref{tab:comparison with WL}. This indicates that while the WL kernel achieves strong classification performance on its own (average accuracy $\sim$0.78), curvature embeddings provide comparable or slightly higher performance (average accuracy $\sim$0.80). More importantly, the combination of WL and CE features yielded the best results overall, with accuracy and F1-scores further improved to $\sim$0.81–0.82 for the best-performing model.

A statistical comparison using the Wilcoxon signed-rank test confirmed that CE features significantly outperformed WL alone ($p < 0.01$) and that the combined model is not worse than either method individually. These findings suggest that curvature information encodes complementary structural properties that are not captured by WL subtree patterns, thereby enhancing classification pipelines when it is integrated with existing graph kernel methods.

\begin{table}[H]
\centering
\caption{Comparison of Weisfeiler–Lehman (WL), Comprehensive evaluation (CE), and their combination. Results are averaged over 20 repetitions of 5-fold cross-validation.}
\label{tab:comparison with WL}
\begin{tabular}{lcc}
\hline
Method & Accuracy & F1-macro \\
\hline
WL only        & $0.7763 \pm 0.0479$ & $0.8020 \pm 0.0419$ \\
CE only        & $0.7980 \pm 0.0395$ & $0.8025 \pm 0.0391$ \\
WL + CE        & $0.8123 \pm 0.0438$ & $0.8192 \pm 0.0438$ \\
\hline
\end{tabular}
\end{table}

\section{Conclusion}
This study introduced the Forman curvature for multilayer networks and demonstrated its utility. Specifically, we showed that computing the curvature of intra-layer edges allows for assessing vertex/edge importance within individual layers, whereas computing that of inter-layer edges provides insights into the relationships between layers and aspects of the overall multilayer network structure. The analysis of the intra-layer curvature confirms that the Forman curvature can play a similar role in assessing local importance within layers as in single-layer graphs. Concurrently, the analysis based on inter-layer curvature represents a novel application, offering a new method for using Forman curvature. In particular, the new metric, which is a comprehensive evaluation, introduced in this study achieves high accuracy in graph classification compared to other traditional metrics. Furthermore, it was demonstrated that integrating this approach with features derived from graph kernels results in enhanced accuracy.
Therefore, the Forman curvature-based indices and analysis framework introduced herein hold potential for application across a wide range of fields dealing with complex weighted multilayer systems. In future research, we plan to investigate the properties of the inter-layer Forman curvature in greater detail and quantify the degree of variation of $W(x^i)$ with greater precision.

\section*{Acknowledgment}
I would like to thank Prof. Hirohisa Yamada for providing numerous practical comments. I would also like to thank all the reviewers who helped improve this paper. 

I would like to thank Editage (www.editage.jp) for the English language editing.

\section*{Funding}
This work was supported in part by a donation from Aioi Nissay Dowa and JSPS KAKENHI [grant number 25K17253].
\section*{Conflict of interest}
None.

\section*{Declaration of generative AI and AI-assisted technologies in the manuscript preparation process.}
During the preparation of this work, the author used ChatGPT (OpenAI) in order to assist in organizing the manuscript structure and summarizing related work. After using this tool, the author reviewed and edited the content as needed and took full responsibility for the content of the published article.


\printbibliography

\end{document}